\newtheorem{lemma}{Lemma}
\newtheorem{corollary}{Corollary}
\newtheorem{theorem}{Theorem}
\newtheorem{axiom}{Axiom}
\newenvironment{customaxiom}[1]
  {\innercustomthm}
  {\endinnercustomthm}
\newcommand\ar[3]
\newcommand\artxt[4]
\newcommand\isom{\mathrel{\stackon[-0.1ex]{\makebox*{\scalebox{1.08}{\AC}}{=\hfill\llap{=}}}{{\AC}}}}
\newcommand\visom{\rotatebox[origin=cc] {90} {$ \isom $}}
\DeclareFontFamily{U}{BOONDOX-cal}{\skewchar\font=45 }
\DeclareFontShape{U}{BOONDOX-cal}{m}{n}{
  <-> s*[1.05] BOONDOX-r-cal}{}
\DeclareFontShape{U}{BOONDOX-cal}{b}{n}{
  <-> s*[1.05] BOONDOX-b-cal}{}
\DeclareMathAlphabet{\mathcalb}{U}{BOONDOX-cal}{m}{n}
\SetMathAlphabet{\mathcalb}{bold}{U}{BOONDOX-cal}{b}{n}
\DeclareMathAlphabet{\mathbcalb}{U}{BOONDOX-cal}{b}{n}
\title{Lambek Invariants of Commutative Squares \\ in a Homological Category}
\thanks{The work of the first author was carried out in the framework of the State Task to the Sobolev Institute of Mathematics (Project FWNF--2022--0006). \\ \today }
\author{Yaroslav Kopylov}
\address{Yaroslav Kopylov
\newline\hphantom{iii} Sobolev Institute of Mathematics
\newline\hphantom{iii} 4 Koptyug Ave.
\newline\hphantom{iii} 630090, Novosibirsk, Russia}
\email{\href{mailto:yakop@math.nsc.ru}{yakop@math.nsc.ru}}
\author{Vadim Leshkov}
\address{Vadim Leshkov
\newline\hphantom{iii} Novosibirsk State University
\newline\hphantom{iii} 1 Pirogova St.
\newline\hphantom{iii} 630090, Novosibirsk, Russia}
\email{\href{mailto:v.leshkov@g.nsu.ru}{v.leshkov@g.nsu.ru}}
\begin{document}

\begin{abstract}
We consider the Lambek invariants (introduced by~Joachim Lambek in~1964) in the context of semiexact and homological categories in the sense of Grandis.
We generalize the Lambek isomorphism theorem to semiexact and homological categories. Throughout the text, for completeness we give proofs of some lemmas which are known in the additive case for the case of semiexact and homological categories.

\vspace{2mm}
\noindent
\textbf{Key words and phrases:} commutative square, semiexact category, homological category, Lambek invariants

\vspace{2mm}
\noindent
\textbf{Mathematics Subject Classification 2020:}  18G50

\end{abstract}
\maketitle

\section{Introduction}\label{intro}

In 1964, in~\protect\cite{Lambek-1964}, Lambek proved the following assertion:
\smallskip

{\it Given a~commutative diagram}
$$
\begin{tikzcd}
    A \ar[r,"f"] \ar[d,"a"'] \ar[r, phantom, shift right=4ex, "S" marking] & B \ar[r,"g"] \ar[d,"b" description] \ar[r, phantom, shift right=4ex, "T" marking] & C \ar[d,"c"]\\
    A' \ar[r,"f'"'] & B' \ar[r,"g'"'] & C'
\end{tikzcd}
$$
{\it of groups and group homomorphisms with exact rows, there is a~natural
isomorphism
$$
({\rm Im}\, b\cap{\rm Im}\, f')/{\rm Im}\,(bf)
\cong
{\rm Ker}(cg)/({\rm Ker}\, b\cdot{\rm Ker}\, g).
$$ }

For a~commutative
square
$$
\begin{tikzcd}
    A \ar[r,"f"] \ar[d,"a"'] \ar[r, phantom, shift right=4ex, "S" marking] & B \ar[d,"b"] \\
    C \ar[r,"g"'] & D
\end{tikzcd}
$$
in the category of groups we put:
$$
{\rm Im}\,S:=({\rm Im}\, b\cap{\rm Im}\, g)/{\rm Im}\,(bf), \quad
{\rm Ker}\,S:={\rm Ker}\,(bf)/({\rm Ker}\, a\cdot{\rm Ker}\, f).
$$
Since ${\rm Ker}\, a$ and ${\rm Ker}\, f$ are normal subgroups in~${\rm Ker}(bf)$,
their product ${\rm Ker} a\cdot{\rm Ker} f$ is the~normal subgroup generated by~them, and hence the quotient group ${\rm Ker}\, S$ is always defined, whereas ${\rm Im}(bf)$ is not always a normal subgroup in ${\rm Im}\, b \cap {\rm Im}\, g$.
However, in Lambek's theorem ${\rm Im}(bf) = b ({\rm Ker}\, g) \trianglelefteq ({\rm Im}\, b \cap {\rm Im}\, f')$.

Later Leicht~\protect\cite{Le} extended the Lambek's theorem to a~wider class of~categories that
includes the~category of~groups~$\mathcalb{G\!r\!p}$, thus giving a~homological proof.
In~\protect\cite{No1}, Nomura in~particular proved Lambek's isomorphism 
in~a~Puppe exact category.

In~\cite{Ko05_2}, these invariants were considered in~a~quasi-abelian category, and
Lambek's isomorphism was established provided that the~morphism~$b$
is exact.

In~the~present article, we prove Lambek's isomorphism in~the~framework of~Grandis homological
categories (see~\cite{Gr92,MGrandis-2013}). These categories were introduced as a~tool for~developing
homological algebra ``in~a~strogly non-abelian setting.'' In~recent years, Grandis homological
categories have found applications in~homological algebra and the theory of approximate representations of groups~\cite{ConnCons2019,Fr2023}.

The structure of the paper is as follows. In Section~{\ref{sec-null}}, we recall the~definitions
of~various classes of~categories with null morphisms and prove some auxiliary assertions.
In Section~{\ref{sec-lamb}}, we introduce the~canonical Lambek morphism in~a~semiexact category
and prove that this is an~isomorphism in~a~Grandis homological category.

\section{Categories with null morphisms}\label{sec-null}

\subsection{Semiexact categories}

First, recall the notion of an ideal of a category and the notions of kernel and cokernel with respect to an ideal introduced by Grandis in \cite[1.3.1]{MGrandis-2013}.

Let $\mathcalb{C}$ be a category.
Consider a class of morphisms $\mathcalb{N} \subseteq {\rm Mor}(\mathcalb{C})$ in $\mathcalb{C}$.
The class $\mathcalb{N}$ is called an \emph{ideal} in $\mathcalb{C}$ if for every morphism $f \in \mathcalb{N}$ any legitimate composite $hfg$ belongs to $\mathcalb{N}$.
The notion of an ideal of a category coincides with the usual notion of an ideal of a semigroup if we consider the class ${\rm Mor}(\mathcalb{C})$ as a semigroup with respect to a partially defined associative operation -- composition.
A pair $(\mathcalb{C}, \mathcalb{N})$ of a category $\mathcalb{C}$ equipped with an ideal $\mathcalb{N}$ is called a \emph{category with null morphisms} and morphisms from $\mathcalb{N}$ are called \emph{null morphisms}.
We denote the fact that a morphism $f\colon A \to B$ is null by the following label over the arrow
$$
\artxt{f}{A}{B}{{\rm null}}
\ \overset{{\rm def}}{\Longleftrightarrow}\
f \in \mathcalb{N}.$$
An object $A \in {\rm Ob}(\mathcalb{C})$ is called a \emph{null object} if ${\rm id}_A \in \mathcalb{N}$.
An ideal $\mathcalb{N}$ is \emph{closed} if there exists a class $\mathcalb{O} \subseteq {\rm Ob}(\mathcalb{C})$ such that every morphism from $\mathcalb{N}$ factors through some object in $\mathcalb{O}$.

In a category with null morphisms $(\mathcalb{C}, \mathcalb{N})$, a~morphism
$k\colon K \to A$ is called a \emph{kernel} of a morphism $f\colon A \to B$ if $fk\in \mathcalb{N}$ and for any morphism $x$ with $fx\in \mathcalb{N}$ there exists a~unique morphism $x'$ such that $x=kx'$. Dually, a~morphism
$l\colon B \to L$ is called a \emph{cokernel} of a morphism $f\colon A \to B$ if $lf\in \mathcalb{N}$ and for any morphism $y$ with $yf\in \mathcalb{N}$ there exists a~unique morphism $y'$ such that $y=y'l$.
If $k\colon K \to A$ is a~kernel of $f\colon A \to B$ then we write $K = {\rm Ker}\, f$, $k = {\rm ker}\, f$; if $l\colon B \to L$ is a~cokernel of~$f$ then we write $L={\rm Coker}f$ and
$l={\rm coker}f$. All we have said is schematically depicted in~the~diagram
\[
\begin{tikzcd}
    {\rm Ker}\,f \ar[r,"{\rm ker}\, f"] \ar[rr,bend left=45,"{\rm null}"] & A \ar[r,"f"] & B & & A \ar[r,"f"] \ar[rr,"{\rm null}", bend left=45] \ar[dr,"{\rm null}"',bend right=30] & B \ar[r,"{\rm coker}\, f"] \ar[d,"y"'] & {\rm Coker}\, f \ar[dl,dashed,bend left=30,"y'"] 
  \\
   & \bullet \ar[ul,dashed,bend left=30,"x'"] \ar[u,"x"] \ar[ur,"{\rm null}"',bend right=30] & & & & \bullet
\end{tikzcd}
\].
    
The \emph{image} and \emph{coimage} of a morphism $f\colon A \to B$ are defined as 
$$\ar{{\rm im}\, f := {\rm ker} ({\rm coker}\, f)}{{\rm Im}\, f}{B}$$
and
$$\ar{{\rm coim}\, f := {\rm coker} ({\rm ker}\, f)}{A}{{\rm Coim}\, f},$$
respectively.
We say that a~morphism $f$ is an~$\mathcalb{N}$\emph{-monomorphism} (respectively, $\mathcalb{N}$\emph{-epimorphism}) if it satisfies the left-hand condition (respectively, the right-hand condition):
$$
\text{if}\ fh\ \text{is null then}\ h\ \text{is null};
\quad\quad
\text{if}\ kf\ \text{is null then}\ k\ \text{is null}.
$$
It is clear that ${\rm ker}\, f$ is an~$\mathcalb{N}$-monomorphism and ${\rm coker}\, f$ is an $\mathcalb{N}$-epimorphism.
The arrows $\rightarrowtail$ and $\twoheadrightarrow$ are reserved for kernels and cokernels respectively.
Note that if $(f\colon X \to Y) \in \mathcalb{N}$ then ${\rm ker}\, f = {\rm id}_X$ and ${\rm coker}\, f = {\rm id}_Y$.
We make use of the following important lemma (see \cite[1.3.1]{MGrandis-2013}):

\begin{lemma}\label{lemma_n_mono_ker_null_and_dual}
    The following are equivalent:
    \begin{itemize}
        \item[{\rm (i)}]
        $f$ is an $\mathcalb{N}$-monomorphism;
        \item[{\rm (ii)}]
        ${\rm ker}\, f \in \mathcalb{N}$;
        \item[{\rm (iii)}]
        ${\rm ker}(fh) = {\rm ker}\, h$ for any $h$.
    \end{itemize}
    Dually, the following are equivalent:
    \begin{itemize}
        \item[{\rm (i)}]
        $f$ is an $\mathcalb{N}$-epimorphism;
        \item[{\rm (ii)}]
        ${\rm coker}\, f \in \mathcalb{N}$;
        \item[{\rm (iii)}]
        ${\rm coker}(kf) = \mathrm{coker}\, k$ for any $k$.
    \end{itemize}
\end{lemma}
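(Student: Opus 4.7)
The plan is to establish the equivalences cyclically, (i) $\Rightarrow$ (ii) $\Rightarrow$ (iii) $\Rightarrow$ (i), and then obtain the dual equivalence about $\mathcalb{N}$-epimorphisms by a formal passage to the opposite category. The direction (i) $\Rightarrow$ (ii) is immediate: the defining property of the kernel gives $f \cdot \mathrm{ker}\, f \in \mathcalb{N}$, and since $f$ is an $\mathcalb{N}$-monomorphism this forces $\mathrm{ker}\, f \in \mathcalb{N}$. The direction (iii) $\Rightarrow$ (i) is also short: if $fh$ is null, then by the remark preceding the lemma $\mathrm{ker}(fh) = \mathrm{id}$, so hypothesis (iii) yields $\mathrm{ker}\, h = \mathrm{id}$, and the defining relation $h \cdot \mathrm{ker}\, h \in \mathcalb{N}$ collapses to $h \in \mathcalb{N}$.

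The crux is (ii) $\Rightarrow$ (iii). Fix a morphism $h$ composable with $f$ and set $m := \mathrm{ker}\, h$; I will show that $m$ satisfies the universal property of $\mathrm{ker}(fh)$. The easy half is $(fh)m \in \mathcalb{N}$, which follows because $hm \in \mathcalb{N}$ by the definition of $m$ together with the ideal property of $\mathcalb{N}$. The real content is the following auxiliary claim, which I expect to be the main obstacle: whenever $(fh)x \in \mathcalb{N}$, also $hx \in \mathcalb{N}$. To prove it, I apply the universal property of $\mathrm{ker}\, f$ to the morphism $hx$, whose composite with $f$ is null by assumption; this yields a factorization $hx = (\mathrm{ker}\, f)\cdot u$ for some $u$, and since $\mathrm{ker}\, f \in \mathcalb{N}$ by hypothesis (ii), the ideal property forces $hx$ itself to be null. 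Once this is in hand, the universal property of $m = \mathrm{ker}\, h$ produces the required unique factorization $x = m x'$, completing the identification $m = \mathrm{ker}(fh)$.

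A subtlety to track is the uniqueness of the factorization $x = m x'$, but this is already encoded in the universal property of $m = \mathrm{ker}\, h$ itself, so no independent verification that $m$ is an ordinary monomorphism is required. The dual equivalence, concerning $\mathcalb{N}$-epimorphisms with cokernels in place of kernels, follows by replaying the argument in the opposite category $(\mathcalb{C}^{\mathrm{op}}, \mathcalb{N})$: the notions of ideal, null morphism, kernel versus cokernel, and $\mathcalb{N}$-mono versus $\mathcalb{N}$-epi all swap roles consistently under arrow reversal, so no separate proof is needed.
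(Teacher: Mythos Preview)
Your cyclic argument (i) $\Rightarrow$ (ii) $\Rightarrow$ (iii) $\Rightarrow$ (i) is correct, and the paper itself gives no proof of this lemma at all---it simply quotes it from Grandis \cite[1.3.1]{MGrandis-2013}---so there is nothing to compare against. One small caveat: the lemma is stated in the paper \emph{before} Axiom~\ref{second_axiom} (existence of all kernels and cokernels), so in your step (ii) $\Rightarrow$ (iii) you tacitly assume that $\ker h$ exists when you set $m := \ker h$; to be fully general you could also observe, by the same ``factor through $\ker f$ and use $\ker f \in \mathcalb{N}$'' trick, that any kernel of $fh$ is already a kernel of $h$, so existence of one forces existence of the other. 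In the semiexact setting where the lemma is actually used this is moot.
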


Let $(\mathcalb{C}, \mathcalb{N})$ be a category with null morphisms.
Consider the~following two axioms~\cite{MGrandis-2013}.

\begin{axiom}\label{first_axiom}
    The ideal $\mathcalb{N}$ is closed.
\end{axiom}

\begin{axiom}\label{second_axiom}
    Every morphism has a kernel and cokernel with respect to $\mathcalb{N}$.
\end{axiom}

A category with null morphisms is called \emph{semiexact} if it satisfies Axioms \ref{first_axiom} and \ref{second_axiom}.
Semiexact categories are also called \emph{ex1-categories}.

Consider a morphism $f\colon A \to B$ in a semiexact category $(\mathcalb{C}, \mathcalb{N})$.
Note that $({\rm coker}\, f) f$ is null.
Hence, there exists a unique morphism $f'\colon A \to {\rm Im}\, f$ such that $f = ({\rm im}\, f) f'$.
Note that $f' ({\rm ker}\, f)$ is null.
Hence, $f'$ factors through the coimage of f, i.e. there exists a morphism $\overline{f}\colon {\rm Coim}\, f \to {\rm Im}\, f$.
\[
\begin{tikzcd}
    {\rm Ker}\, f \ar[r,"{\rm ker}\, f",tail] & A \ar[r,"f"] \ar[d,"{\rm coim}\, f"',two heads] \ar[rd,"f'"',dashed,bend left=25] & B \ar[r,"{\rm coker}\, f",two heads] & {\rm Coker}\, f\\
    & {\rm Coim}\, f \ar[r,dashed,"\overline{f}"'] & {\rm Im}\, f \ar[u,"{\rm im}\, f"',tail]
\end{tikzcd}
\]
Therefore, any morphism $f$ in a semiexact category can be factored as 
\begin{equation}\label{normal-fact}
f = ({\rm im}\, f)\, \overline{f}\, ({\rm coim}\, f). 
\end{equation}
We refer to~\eqref{normal-fact} as a~normal (or canonical) decomposition
of~$f$.
Two normal decompositions of~$f$ are canonically isomorphic.
The~morphism~$\overline{f}$ in~\eqref{normal-fact}
is defined uniquely once ${\rm im}\, f$ and ${\rm coim}\, f$ are chosen.
If $\overline{f}$ is an isomorphism, then the morphism $f$ is called \emph{exact}.
Usually, when $f$ is exact, we omit the middle isomorphism and write $f = ({\rm im}\, f)\, ({\rm coim}\, f)$.
Omitting the middle isomorphism is appropriate because it is always possible to choose the correct isomorphic representatives of kernels and cokernels to make $\overline{f}$ equal to the indentity morphism.

We prove several lemmas which are useful for the results of the present work.

The following lemma is an analog of \cite[Lemma 2.2]{KoWe2012}.

\begin{lemma}[]\label{kop_weg_lemma}
Given morphisms  $f\colon X \to Y$ and $g\colon Y \to Z$  in a semiexact category $(\mathcalb{C}, \mathcalb{N})$, the following identities hold:
$$\mathrm{coker}(g\, (\mathrm{im}\, f)) = \mathrm{coker}(gf),\quad
\mathrm{ker}((\mathrm{coim}\, g)\,  f) = \mathrm{ker}(gf).$$
\end{lemma}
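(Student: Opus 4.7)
The plan is to verify directly that the morphism $c := \mathrm{coker}(g\, \mathrm{im}\, f)$ satisfies the universal property defining $\mathrm{coker}(gf)$; the second identity will then follow by the dual argument. The subtle point of this approach is that it avoids invoking any property of the middle morphism $\overline{f}$ in the normal decomposition $f = (\mathrm{im}\, f)\, \overline{f}\, (\mathrm{coim}\, f)$, since in a general semiexact category $\overline{f}$ need not be an $\mathcalb{N}$-epimorphism.

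For the first equality, the nullness of $c\, gf$ is immediate from $c\, g\, \mathrm{im}\, f \in \mathcalb{N}$ together with $gf = (g\, \mathrm{im}\, f)\, \overline{f}\, (\mathrm{coim}\, f)$ and the closure of $\mathcalb{N}$ under composition. For the universal part, suppose $y\, gf \in \mathcalb{N}$; rewriting this as $(yg)\, f \in \mathcalb{N}$, the universal property of $\mathrm{coker}\, f$ yields a morphism $z$ with $yg = z\, \mathrm{coker}(f)$. Since $\mathrm{im}\, f = \mathrm{ker}(\mathrm{coker}\, f)$, the composite $\mathrm{coker}(f)\, \mathrm{im}\, f$ lies in $\mathcalb{N}$, so $yg\, \mathrm{im}\, f = z\, \mathrm{coker}(f)\, \mathrm{im}\, f \in \mathcalb{N}$. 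The universal property of $c$ then produces a unique $y'$ with $y = y'\, c$, which confirms that $c$ is a cokernel of $gf$.

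The second identity $\mathrm{ker}((\mathrm{coim}\, g)\, f) = \mathrm{ker}(gf)$ is obtained by the mirror argument: setting $k := \mathrm{ker}((\mathrm{coim}\, g)\, f)$, the decomposition of $g$ shows $gf\, k \in \mathcalb{N}$; conversely, from $gf\, x \in \mathcalb{N}$ the universal property of $\mathrm{ker}\, g$ yields $w$ with $fx = (\mathrm{ker}\, g)\, w$, and then $(\mathrm{coim}\, g)\, fx = (\mathrm{coim}\, g)\, (\mathrm{ker}\, g)\, w$ is null because $\mathrm{coim}\, g = \mathrm{coker}(\mathrm{ker}\, g)$, so $x$ factors uniquely through $k$. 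The main obstacle I would flag is the tempting but blocked shortcut of cancelling $u := \overline{f}\, \mathrm{coim}\, f$ on the right in the universal verification: this would require $u$ to be an $\mathcalb{N}$-epimorphism, which is not guaranteed by the semiexact axioms. Routing the argument through $\mathrm{coker}\, f$ sidesteps this issue entirely and uses nothing beyond the defining properties of kernels, cokernels, and the closure of $\mathcalb{N}$ under composition.
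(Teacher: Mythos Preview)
Your argument is correct and follows essentially the same route as the paper's proof: both verify the cokernel universal property for $\mathrm{coker}(g\,\mathrm{im}\,f)$ by first passing through $\mathrm{coker}\,f$ to conclude that $y g\,(\mathrm{im}\,f)$ is null, then invoking the universal property of $\mathrm{coker}(g\,\mathrm{im}\,f)$, with the second identity handled by duality. The only difference is cosmetic (variable names and your added remark about the blocked shortcut via $\overline{f}$).
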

\begin{proof}
    Observe that ${\rm coker}(g ({\rm im}\, f)) g f \in \mathcalb{N}$.
    Given a morphism $x\colon Z \to W$ with $x g f \in \mathcalb{N}$, there exists a unique morphism $y\colon {\rm Coker}\, f \to W$ such that $x g = y ({\rm coker}\ f)$.
    Hence, $x g ({\rm im} f) = y ({\rm coker}\ f) ({\rm im} f) \in \mathcalb{N}$ and there exists a unique $z\colon {\rm Coker}(g ({\rm im}\, f)) \to W$ such that $x = z ({\rm coker}(g ({\rm im}\, f)))$.
    Therefore, ${\rm coker}(g ({\rm im}\, f))$ is a cokernel of $gf$.
    The second identity is obtained by duality.
\end{proof}

\begin{corollary}\label{yak_lemma_corr_1}
    In a semiexact category $(\mathcalb{C}, \mathcalb{N})$, given morphisms $f$ and $g$ for~which $gf$ is defined, we have:
    \begin{itemize}
        \item[\rm (i)] if kernels are closed under composition and $g$ is a kernel then $\mathrm{im}(gf) = g (\mathrm{im}\, f)$;
        \item[\rm (ii)] dually, if cokernels are closed under composition and $f$ is a cokernel then $\mathrm{coim}(gf) = (\mathrm{coim}\, g) f$.
    \end{itemize}
\end{corollary}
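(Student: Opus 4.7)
The plan is to reduce (i) to Lemma~\ref{kop_weg_lemma} combined with the general fact that every kernel coincides with its own image. First I would take the first identity of that lemma, $\mathrm{coker}(g\,(\mathrm{im}\, f)) = \mathrm{coker}(gf)$, and apply $\mathrm{ker}$ to both sides; since by definition $\mathrm{im}(-) = \mathrm{ker}(\mathrm{coker}(-))$, this gives $\mathrm{im}(gf) = \mathrm{im}(g\,(\mathrm{im}\, f))$ up to canonical isomorphism.

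Next I would observe that $\mathrm{im}\, f = \mathrm{ker}(\mathrm{coker}\, f)$ is a kernel by construction, while $g$ is a kernel by hypothesis; using the assumption that kernels are closed under composition, it follows that $g\,(\mathrm{im}\, f)$ is itself a kernel. The key technical step is then to verify that every kernel $k$ satisfies $\mathrm{im}(k) = k$, i.e.\ that $k$ is canonically isomorphic to the kernel of its own cokernel. If $k = \mathrm{ker}(\phi)$, then $\phi k \in \mathcalb{N}$ and $\phi$ factors through $\mathrm{coker}\, k$ as $\phi = \phi'\,(\mathrm{coker}\, k)$; for any $x$ with $(\mathrm{coker}\, k)\, x \in \mathcalb{N}$, the composite $\phi x$ also lies in $\mathcalb{N}$ by the ideal property, so the universal property of $k = \mathrm{ker}(\phi)$ yields a unique factorization of $x$ through $k$. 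This is precisely the universal property of $\mathrm{ker}(\mathrm{coker}\, k)$, so $k \cong \mathrm{im}(k)$. Applying this to the kernel $g\,(\mathrm{im}\, f)$ concludes (i).

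Part (ii) follows by a completely dual argument: apply the second identity of Lemma~\ref{kop_weg_lemma}, take cokernels to obtain $\mathrm{coim}(gf) = \mathrm{coim}((\mathrm{coim}\, g)\, f)$, use that $\mathrm{coim}\, g$ and $f$ are cokernels together with closure of cokernels under composition, and finish via the dual statement that every cokernel equals its own coimage.

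I expect the main obstacle to be the bookkeeping around canonical-isomorphism conventions: since $\mathrm{ker}$, $\mathrm{coker}$, $\mathrm{im}$ and $\mathrm{coim}$ are defined only up to unique isomorphism, each equality in the statement has to be interpreted modulo the standard canonical isomorphisms. Once that convention is fixed, the proof is a short chain of universal-property verifications; no real difficulty beyond the normality-of-kernels lemma remains.
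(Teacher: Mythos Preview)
Your proposal is correct and follows essentially the same route as the paper: apply Lemma~\ref{kop_weg_lemma} to get $\mathrm{im}(gf)=\mathrm{im}(g\,(\mathrm{im}\,f))$, observe that $g\,(\mathrm{im}\,f)$ is a composite of kernels and hence a kernel, and conclude because a kernel equals its own image. The paper's proof is a two-line sketch that simply asserts this last fact, whereas you supply a direct universal-property verification of it; this is a welcome addition, since in the paper the statement ``a kernel equals its image'' is only formally established in Lemma~\ref{ker_coker_same}(i), which itself cites the present corollary.
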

\begin{proof}
    (i) Observe that by Lemma \ref{kop_weg_lemma} we have 
    $${\rm im}(gf) = {\rm ker}({\rm coker}(gf)) = {\rm ker}\, {\rm coker}(g ({\rm ker} ({\rm coker}\, f))).$$
    Note that $g ({\rm ker} ({\rm coker}\, f))$ is a kernel.
    Hence, it is equal to its image.
    Assertion (ii) holds by duality.
\end{proof}

\begin{lemma}[Properties of exact morphisms]\label{ker_coker_same}
    The following assertions hold in a semiexact category $(\mathcalb{C},\mathcalb{N})$:
    \begin{enumerate}
        \item[\rm (i)] $\mathrm{ker}\, \alpha$ is an exact $\mathcalb{N}$-monomorphism and $\mathrm{coker}\, \alpha$ is an exact $\mathcalb{N}$-epimorphism for any morphism~$\alpha$;
        \item[\rm (ii)] $\alpha = \mathrm{im}\, \alpha$ if and only if $\alpha$ is an exact $\mathcalb{N}$-monomorphism and $\alpha = \mathrm{coim}\, \alpha$ if and only if $\alpha$ is an exact $\mathcalb{N}$-epimorphism;
        \item[\rm (iii)]
        if $f\colon A \to B$ is a morphism such that $f = h g$, where $h$ is a kernel and $g$ is a cokernel, then $f$ is exact, i.e., $f = (\mathrm{im}\, f)\, (\mathrm{coim}\, f)$.
    \end{enumerate}
\end{lemma}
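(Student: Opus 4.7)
The plan is to prove the three parts in order; part~(i) carries the bulk of the work, and parts~(ii) and~(iii) follow from it by bookkeeping with the normal decomposition~\eqref{normal-fact}.

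For part~(i), set $k := \mathrm{ker}\,\alpha$; it is already noted that $k$ is an $\mathcalb{N}$-monomorphism, so the only content is exactness. By Lemma~\ref{lemma_n_mono_ker_null_and_dual}, $\mathrm{ker}\, k \in \mathcalb{N}$, hence $\mathrm{coim}\, k = \mathrm{coker}(\mathrm{ker}\, k) = \mathrm{id}$ because the cokernel of a null morphism is the identity. The key step is to show that $k$ is itself a kernel of $\mathrm{coker}\, k$. Indeed, $\alpha k$ is null, so the universal property of $\mathrm{coker}\, k$ yields $\alpha = \alpha'\cdot\mathrm{coker}\, k$ for some $\alpha'$; then for any $x$ with $(\mathrm{coker}\, k)\, x$ null one has $\alpha x$ null, and the universal property of $k = \mathrm{ker}\,\alpha$ gives the desired unique factorization $x = k x'$. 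Consequently $k$ represents $\mathrm{ker}(\mathrm{coker}\, k) = \mathrm{im}\, k$, and choosing the representative $\mathrm{im}\, k = k$ the uniqueness of $\overline{k}$ in~\eqref{normal-fact} forces $\overline{k} = \mathrm{id}$, proving exactness. Duality handles $\mathrm{coker}\,\alpha$.

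For part~(ii), if $\alpha = \mathrm{im}\,\alpha = \mathrm{ker}(\mathrm{coker}\,\alpha)$ then $\alpha$ is a kernel, so part~(i) makes it an exact $\mathcalb{N}$-monomorphism. Conversely, if $\alpha$ is an exact $\mathcalb{N}$-monomorphism then $\mathrm{coim}\,\alpha = \mathrm{id}$ (as in~(i)) and $\overline{\alpha}$ is an isomorphism, so~\eqref{normal-fact} collapses to $\alpha = (\mathrm{im}\,\alpha)\,\overline{\alpha}$; absorbing $\overline{\alpha}$ into the choice of representative of $\mathrm{im}\,\alpha$ yields $\alpha = \mathrm{im}\,\alpha$. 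The coimage version is dual. For part~(iii), the plan is to identify $\mathrm{im}\, f$ with $h$ and $\mathrm{coim}\, f$ with $g$: since $h$ is a kernel, hence an $\mathcalb{N}$-monomorphism, $fk = hgk$ is null iff $gk$ is null, so $\mathrm{ker}\, f = \mathrm{ker}\, g$; dually $\mathrm{coker}\, f = \mathrm{coker}\, h$. Therefore $\mathrm{im}\, f = \mathrm{ker}(\mathrm{coker}\, h) = h$ and $\mathrm{coim}\, f = g$ by the calculation in~(i). With these representatives, the normal decomposition reads $f = h\,\overline{f}\, g$ while $f = hg$ by hypothesis; the uniqueness of $\overline{f}$ (which follows because $h$ is monic and $g$ is epic) forces $\overline{f} = \mathrm{id}$, so $f$ is exact.

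The main obstacle is the step ``a kernel equals its own image'' in part~(i): one must chain the universal property of $\mathrm{coker}\, k$ with that of $k$ itself. Once that is secured, everything else reduces to a careful application of the uniqueness of the normal factorization, with the mono/epi properties of kernels and cokernels doing the required cancellations in part~(iii).
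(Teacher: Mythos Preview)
Your argument is correct in all three parts. Parts~(ii) and~(iii) match the paper's proof almost verbatim: the paper likewise reduces~(ii) to the observation that $\mathrm{coim}\,\alpha$ is an isomorphism when $\ker\alpha$ is null, and for~(iii) it also shows $\ker f = \ker g$ and $\mathrm{coker}\, f = \mathrm{coker}\, h$ and then invokes~(ii) to identify $g$ with $\mathrm{coim}\, f$ and $h$ with $\mathrm{im}\, f$.

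The only real difference is in part~(i). The paper does not verify the universal property of $\ker(\mathrm{coker}\, k)$ directly; instead it writes $\mathrm{im}\,\beta = \ker(\mathrm{coker}\,\beta) = \ker(\mathrm{coim}\,\alpha)$ and then appeals to Lemma~\ref{kop_weg_lemma} (the identity $\ker((\mathrm{coim}\, g)f)=\ker(gf)$, with $f=\mathrm{id}$) to conclude $\ker(\mathrm{coim}\,\alpha)=\ker\alpha=\beta$. Your route---factoring $\alpha$ through $\mathrm{coker}\, k$ and chasing the universal property---is more self-contained: it avoids the dependence on Lemma~\ref{kop_weg_lemma} and exposes directly the classical fact that every kernel is the kernel of its own cokernel. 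The paper's route, on the other hand, illustrates how Lemma~\ref{kop_weg_lemma} packages this kind of cancellation once and for all. Either way the conclusion $\beta=\mathrm{im}\,\beta$ (hence $\overline{\beta}$ an isomorphism) is the same.
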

\begin{proof}
    \begin{enumerate}
        \item[(i)]
        It is obvious that kernels are $\mathcalb{N}$-monomorphisms and cokernels are $\mathcalb{N}$-epimorphisms.
        We show that (co)kernels are exact.
        Suppose that $\beta$ is a kernel: $\beta = {\rm ker}\, \alpha$ for some morphism $\alpha$.
        Consider its normal decomposition.
        Note that
        $$\beta = ({\rm im}\, \beta) \overline{\beta} ({\rm coim}\, \beta) = ({\rm im}\, \beta)\, \overline{\beta}\, \overbrace{({\rm coker}(\, \underbrace{{\rm ker}\, \beta}_{\in \mathcalb{N}}\, ))}^{{\rm id}},$$
        because $\beta$ is a kernel.
        Assume  without loss of generality that $\beta = {\rm ker}\, \alpha = ({\rm im}\, \beta) \overline{\beta}$.
        Hence, ${\rm im}\, \beta = {\rm ker}({\rm coker}\, \beta) = {\rm ker}({\rm coim}\, \alpha)$.
        By Corollary \ref{yak_lemma_corr_1}, ${\rm ker}({\rm coim}\, \alpha) = {\rm ker}\, \alpha$.
        Therefore, ${\rm im}\, \beta = {\rm ker}\, \alpha$.
        Hence, $\overline{\beta}$ is an isomorphism and $\beta$ is exact.
        Consequently, kernels are exact and, by duality, cokernels are also exact.
    
        \item[(ii)]
        If $\alpha = {\rm im}\, \alpha$ then $\alpha$ is an exact $\mathcalb{N}$-monomorphism by the definition of an exact morphism and the definition of a kernel.
        Suppose that $\alpha$ is an exact $\mathcalb{N}$-monomorphism.
        Hence, $\alpha = ({\rm im}\, \alpha) ({\rm coim}\, \alpha)$ and $\alpha$ has a null kernel: ${\rm ker}\, \alpha \in \mathcalb{N}$.
        Hence, ${\rm coim}\, \alpha = {\rm coker}({\rm ker}\, \alpha)$ is an isomorphism and $\alpha$ is its own image.
        Dually, $\alpha = {\rm coim}\, \alpha$ if and only if $\alpha$ is an exact $\mathcalb{N}$-epimorphism.

        \item[(iii)]
        Note that $f\, ({\rm ker}\, g) = h g\, ({\rm ker}\, g) \in \mathcalb{N}$.
        Hence, there exists a unique morphism $\alpha\colon {\rm Ker}\, g \to {\rm Ker}\, f$ such that ${\rm ker}\, g = ({\rm ker}\, f)\, \alpha$.
        Also, note that $h g\, ({\rm ker}\, f) = f\, ({\rm ker}\, f) \in \mathcalb{N}$ and $h$ is an~$\mathcalb{N}$-monomorphism.
        Hence, $g\, ({\rm ker}\, f) \in \mathcalb{N}$ and ${\rm ker}\, f$ factors through ${\rm ker}\, g$ uniquely, i.e. there exists a unique morphism $\beta\colon {\rm Ker}\, f \to {\rm Ker}\, g$ such that ${\rm ker}\, f = ({\rm ker}\, g)\, \beta$.
        By the universal property of the~kernel, we get $\alpha \beta = {\rm id}_{{\rm Ker}\, f}$ and $\beta \alpha = {\rm id}_{{\rm Ker}\, g}$.
        Hence, the~kernels of $f$ and $g$ are isomorphic and, dually, the~cokernels of $f$ and $h$ are isomorphic.
        \[
        \begin{tikzcd}
            {\rm Ker}\, f \ar[r,"{\rm ker}\, f",tail] \ar[d,"\beta",bend left=30,dashed] \ar[d,phantom,"\visom"] & A \ar[rr,"f"',bend left=-35] \ar[r,"g"] & \bullet \ar[r,"h"] & B \ar[r,"{\rm coker}\, f",two heads] \ar[rd,"{\rm coker}\, h"',tail,bend right=30] & {\rm Coker}\, f \ar[d,phantom,"\visom"]\\
            {\rm Ker}\, g \ar[u,"\alpha",dashed,bend left=30] \ar[ur,"{\rm ker}\, g"',tail,bend right=30] & & & & {\rm Coker}\, h
        \end{tikzcd}
        \]
        From the~isomorphism ${\rm ker}\, g = {\rm ker}\, f$ we get ${\rm coim}\, g = {\rm coim}\, f$.
        Hence, by Lemma \ref{ker_coker_same}, $g = {\rm coim}\, g = {\rm coim}\, f$.
        Dually, $h = {\rm im}\, h = {\rm im}\, f$.
    \end{enumerate}
\end{proof}

A sequence of morphisms
\begin{equation}\label{long_sequence}
\begin{tikzcd}
    \ldots \ar[r,"f_{-2}"] & X_{-1} \ar[r,"f_{-1}"] & X_0 \ar[r,"f_0"] & X_1 \ar[r,"f_1"] & X_2 \ar[r,"f_2"] & \ldots
\end{tikzcd}
\end{equation}
in a semiexact category is called a \emph{null-sequence} (or a sequence \emph{of order two}) if any composition of two consecutive morphisms $f_{i+1} f_i$ is null.
As in \cite[1.5.2]{MGrandis-2013}, we call sequence (\ref{long_sequence}) \emph{exact at $X_i$} if ${\rm im}\, f_{i-1} = {\rm ker}\, f_i$, or, equivalently, (by Lemma \ref{kop_weg_lemma}) ${\rm coker}\, f_{i-1} = {\rm coim}\, f_i$ due to the~equalities
$${\rm coker}\, f_{i-1} = {\rm coker}({\rm im}\, f_{i-1}) = {\rm coker}({\rm ker}\, f_{i}) = {\rm coim}\, f_i.$$

The~following two lemmas are formulated in~\cite{MGrandis-2013} and are not hard to~prove.

\begin{lemma}\label{ker_coker_pb_po}
In a semiexact category $(\mathcalb{C}, \mathcalb{N})$, if $\alpha\colon A \twoheadrightarrow B$ is a cokernel, i.e. $\alpha = \mathrm{coker}\, \alpha'$ for a morphism $\alpha'\colon \bullet \to A$, then for every $g\colon A \to A'$ there exists a morphism $g'\colon B \to \mathrm{Coker}(g \alpha')$ such that the left-hand square is a pushout:
    \[
    \begin{tikzcd}
        \bullet \ar[r,"\alpha'"] & A \ar[r,"\alpha\, =\, \mathrm{coker}\, \alpha'",two heads] \ar[d,"g"'] \ar[r, phantom, shift right=4ex, "{\rm PO}" marking] & B \ar[d,"g'",dashed]\\
        & A' \ar[r, "\mathrm{coker}(g \alpha')"',two heads] & \mathrm{Coker}(g \alpha')
    \end{tikzcd}
    \quad
    \quad
    \begin{tikzcd}
        \mathrm{Ker}(\beta' f) \ar[r,"\mathrm{ker}(\beta' f)",tail] \ar[d,dashed,"f'"'] & D' \ar[d,"f"]\\
        C \ar[r, phantom, shift left=4ex, "{\rm PB}" marking] \ar[r,"\beta\, =\, \mathrm{ker}\, \beta'"',tail] & D \ar[r,"\beta'"'] & \bullet
    \end{tikzcd}
    \]
    Dually, if $\beta\colon C \rightarrowtail D$ is a kernel, i.e. $\beta = \mathrm{ker}\, \beta'$ with $\beta'\colon D \to \bullet$, then for every $f\colon D' \to D$ there exists a morphism $f'\colon \mathrm{Ker}(\beta' f) \to C$ such that the right-hand square is a pullback.
\end{lemma}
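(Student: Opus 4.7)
My plan is to build $g'$ via the universal property of $\alpha$ and then directly verify the pushout axiom, using that cokernels in a category with null morphisms are epimorphisms in the classical sense. Set $\gamma:=\mathrm{coker}(g\alpha')$. Since $\gamma g\alpha'\in\mathcalb{N}$ and $\alpha=\mathrm{coker}\,\alpha'$, the universal property of $\alpha$ applied to $\gamma g$ produces a unique $g'\colon B\to\mathrm{Coker}(g\alpha')$ with $g'\alpha=\gamma g$, which makes the square commute.

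For the pushout property, take any $h\colon B\to X$ and $h'\colon A'\to X$ with $h\alpha=h'g$. Then $h'g\alpha'=h\alpha\alpha'\in\mathcalb{N}$ (since $\alpha\alpha'\in\mathcalb{N}$), so the universal property of $\gamma$ yields a unique $u\colon\mathrm{Coker}(g\alpha')\to X$ with $u\gamma=h'$. In addition, $ug'\alpha=u\gamma g=h'g=h\alpha$, and I need to conclude $ug'=h$.

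The main obstacle is this cancellation step: we are not in an additive setting and cannot form $h-ug'$. I will invoke the following general observation, which should be extracted as a preliminary: if $\alpha=\mathrm{coker}\,\alpha'$ and $k_1\alpha=k_2\alpha$, then $(k_1\alpha)\alpha'\in\mathcalb{N}$, and both $k_1$ and $k_2$ factor $k_1\alpha$ through $\alpha$, so the uniqueness clause of the universal property of the cokernel forces $k_1=k_2$. Applying this cancellation to $\alpha$ gives $ug'=h$, and applying it to $\gamma$ gives the uniqueness of $u$. The pullback statement is obtained by the dual argument, in which one uses the dual observation that a kernel is a monomorphism in the classical sense.
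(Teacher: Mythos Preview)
The paper does not actually supply a proof of this lemma: immediately before it, the authors write that the lemma is ``formulated in~\cite{MGrandis-2013} and not hard to prove,'' and no \texttt{proof} environment follows the statement. So there is nothing to compare your argument against.

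Your argument is correct. The only point that deserves emphasis is the cancellation step, and you have handled it properly: a cokernel $\alpha=\mathrm{coker}\,\alpha'$ is automatically an epimorphism in the ordinary categorical sense, because if $k_1\alpha=k_2\alpha$ then $k_i\alpha\alpha'\in\mathcalb{N}$ and the \emph{uniqueness} clause in the universal property of $\alpha$ forces $k_1=k_2$. With that in hand, the construction of $g'$ via the universal property of $\alpha$, the construction of the mediating morphism $u$ via the universal property of $\gamma=\mathrm{coker}(g\alpha')$, and the verification of both triangles and of uniqueness all go through exactly as you wrote. The pullback half is indeed the formal dual.
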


The following property of the middle morphism $\overline{\alpha}$ in a~normal decomposition of a morphism $\alpha$ was proved for the case of semiexact categories in \cite{MGrandis-2013} with an additional requirement on compositions of kernels and cokernels.
It can be deduced using Lemma \ref{kop_weg_lemma}.

\begin{lemma}\label{ex2_middle_morphism_th}
    In a semiexact category $(\mathcalb{C},\mathcalb{N})$, if kernels (cokernels) are closed under composition then $\overline{\alpha}$ is an~$\mathcalb{N}$-epimorphism ($\mathcalb{N}$-monomorphism) for any morphism $\alpha$.
\end{lemma}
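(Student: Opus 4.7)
\medskip

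\noindent\emph{Proof plan.}
By the dual of Lemma~\ref{lemma_n_mono_ker_null_and_dual}(ii), saying that $\overline{\alpha}$ is an $\mathcalb{N}$-epimorphism is the same as saying $\mathrm{coker}\, \overline{\alpha}\in\mathcalb{N}$. Equivalently (by the definition of the image as the kernel of the cokernel), it suffices to show that $\mathrm{im}\,\overline{\alpha}$ is the identity on $\mathrm{Im}\,\alpha$, since then $\mathrm{ker}(\mathrm{coker}\,\overline{\alpha})=\mathrm{id}$ forces $\mathrm{coker}\,\overline{\alpha}$ to be null. I will reach this by computing $\mathrm{im}\,\alpha$ in two different ways, starting from the normal decomposition $\alpha=(\mathrm{im}\,\alpha)\,\overline{\alpha}\,(\mathrm{coim}\,\alpha)$.

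First, because $\mathrm{coim}\,\alpha$ is a cokernel and therefore an $\mathcalb{N}$-epimorphism, the dual of Lemma~\ref{lemma_n_mono_ker_null_and_dual}(iii) yields
\[
\mathrm{coker}\,\alpha \;=\; \mathrm{coker}\bigl((\mathrm{im}\,\alpha)\,\overline{\alpha}\,(\mathrm{coim}\,\alpha)\bigr)\;=\;\mathrm{coker}\bigl((\mathrm{im}\,\alpha)\,\overline{\alpha}\bigr).
\]
Taking the kernel of both sides gives $\mathrm{im}\,\alpha = \mathrm{im}\bigl((\mathrm{im}\,\alpha)\,\overline{\alpha}\bigr)$. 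Now the hypothesis that kernels are closed under composition, together with the fact that $\mathrm{im}\,\alpha$ is a kernel, allows me to apply Corollary~\ref{yak_lemma_corr_1}(i) and distribute the image through:
\[
\mathrm{im}\bigl((\mathrm{im}\,\alpha)\,\overline{\alpha}\bigr)\;=\;(\mathrm{im}\,\alpha)\,(\mathrm{im}\,\overline{\alpha}).
\]
Combining the two identities yields $(\mathrm{im}\,\alpha)\,(\mathrm{im}\,\overline{\alpha})=\mathrm{im}\,\alpha=(\mathrm{im}\,\alpha)\,\mathrm{id}_{\mathrm{Im}\,\alpha}$. Since $\mathrm{im}\,\alpha$ is a kernel, hence a monomorphism, it cancels on the left, giving $\mathrm{im}\,\overline{\alpha}=\mathrm{id}_{\mathrm{Im}\,\alpha}$. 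By the first observation, $\overline{\alpha}$ is an $\mathcalb{N}$-epimorphism. The parenthetical dual statement follows by the dual argument, swapping kernels with cokernels and the roles of $\mathrm{im}\,\alpha$ and $\mathrm{coim}\,\alpha$ throughout.

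The only delicate point is the usual one in Grandis's setting: (co)kernels and (co)images are defined only up to canonical isomorphism, so the equalities above should be read modulo such isomorphisms, or one should fix representatives making them strict. The main obstacle is therefore purely formal; the real content is the interplay between Lemma~\ref{lemma_n_mono_ker_null_and_dual}(iii)$^{\mathrm{op}}$ (which strips off the $\mathcalb{N}$-epimorphism $\mathrm{coim}\,\alpha$) and Corollary~\ref{yak_lemma_corr_1}(i) (which requires the closure hypothesis on kernels to pull the image through the kernel $\mathrm{im}\,\alpha$).
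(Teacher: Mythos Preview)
Your argument is correct and is exactly the deduction the paper has in mind: the paper does not spell out a proof but indicates that the lemma follows from Lemma~\ref{kop_weg_lemma}, and your use of Corollary~\ref{yak_lemma_corr_1}(i) (which is the relevant consequence of Lemma~\ref{kop_weg_lemma}) together with Lemma~\ref{lemma_n_mono_ker_null_and_dual} to cancel $\mathrm{im}\,\alpha$ and conclude $\mathrm{im}\,\overline{\alpha}=\mathrm{id}$ is precisely that route. The one small point worth making explicit is that kernels in a semiexact category are genuine monomorphisms (this follows immediately from the uniqueness clause in the universal property), so the left cancellation of $\mathrm{im}\,\alpha$ is legitimate.
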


\subsection{Homological categories}

Let $(\mathcalb{C},\mathcalb{N})$ be a semiexact category.
Consider the~following axiom:

\begin{axiom}\label{third_axiom}
    Kernels and cokernels are closed under composition.
\end{axiom}

A semiexact category is called an~\emph{ex2-category}~\cite[2.1.3]{MGrandis-2013} if it satisfies Axiom~\ref{third_axiom}.

\begin{axiom}\label{fourth_axiom}
    Consider a kernel
    $i\colon A \rightarrowtail B$ and a cokernel
    $q\colon B \twoheadrightarrow C$
    such that $(\mathrm{coker}\, i)(\mathrm{ker}\, q) \in \mathcalb{N}$.
    Then there exist a cokernel
    $\pi\colon A \twoheadrightarrow H$
    and a kernel
    $\iota\colon H \rightarrowtail C$
    which make the following square commute:
    \[
    \begin{tikzcd}
        A \ar[d,two heads,dashed,"\pi"'] \ar[r,tail,"i"] & B \ar[d,two heads,"q"]\\
        H \ar[r,tail,dashed,"\iota"'] & C
    \end{tikzcd}
    \]
    The object $H$ in Axiom~\ref{fourth_axiom} is called a \emph{homology object}.
\end{axiom}

An ex2-category is called \emph{homological} (or \emph{ex3-category}) if it satisfies Axiom~\ref{fourth_axiom}.
It is proved in~\cite[2.2.2]{MGrandis-2013} that for an ex2-category, Axiom~\ref{fourth_axiom} is equivalent to the following axiom:

\begin{customaxiom}{4*}\label{po_pb_ker_coker}
    The pullback of a pair $\bullet \twoheadrightarrow \bullet \leftarrowtail \bullet$ is of the form $\bullet \leftarrowtail \bullet \twoheadrightarrow \bullet$.
    Dually, the pushout of a pair $\bullet \leftarrowtail \bullet \twoheadrightarrow \bullet$ is of the form $\bullet \twoheadrightarrow \bullet \leftarrowtail \bullet$.
\end{customaxiom}

It is important to note that in the category of groups $\mathcalb{G\!r\!p}$ is semiexact and it is not homological and not even an~ex2-category.
In $\mathcalb{G\!r\!p}$ the pullback of a pair $\bullet \twoheadrightarrow \bullet \leftarrowtail \bullet$ exists and has the form $\bullet \leftarrowtail \bullet \twoheadrightarrow \bullet$\ . However, the composition of~two kernels
need not be a~kernel in~$\mathcalb{G\!r\!p}$.

\begin{lemma}\label{lemma_pullback_of_exact_is_exact}
    Suppose that in a homological category $(\mathcalb{C},\mathcalb{N})$, the~commutative square
    \begin{equation}\label{comm_square}
    \begin{tikzcd}
        A \ar[r,"\alpha"] \ar[d,"g"']& B \ar[d,"f"]\\
        C \ar[r,"\beta"'] & D
    \end{tikzcd}
    \end{equation}
    is a pullback, $f$ is a kernel, and $\beta$ is exact, then $\alpha$ is exact.  Dually, if {\rm(\ref{comm_square})} is a pushout, $g$ is a cokernel, and $\alpha$ is exact, then $\beta$ is exact.
\end{lemma}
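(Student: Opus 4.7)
The plan is to exhibit $\alpha$ as a composite (kernel)$\circ$(cokernel) and then invoke Lemma~\ref{ker_coker_same}(iii). The natural starting point is the exactness of $\beta$, which gives a canonical factorization $\beta = m p$ with $m := \mathrm{im}\,\beta \colon M \rightarrowtail D$ a kernel and $p := \mathrm{coim}\,\beta \colon C \twoheadrightarrow M$ a cokernel.

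Next, I would form the pullback $E$ of $f$ along $m$. Applying Lemma~\ref{ker_coker_pb_po} with the role of $\beta$ played first by $f$ (and the second morphism $m$) and then by $m$ (and the second morphism $f$), one sees that \emph{both} projections $\alpha_1 \colon E \to B$ and $h \colon E \to M$ of this pullback may be taken to be kernels. The universal property of the pullback then produces a unique $u \colon A \to E$ with $\alpha_1 u = \alpha$ and $h u = p g$, fitting into the diagram
\[
\begin{tikzcd}
A \ar[r,"u"] \ar[d,"g"'] & E \ar[r,"\alpha_1",tail] \ar[d,"h"',tail] & B \ar[d,"f",tail]\\
C \ar[r,"p"',two heads] & M \ar[r,"m"',tail] & D
\end{tikzcd}
\]
Since the outer rectangle and the right-hand square are pullbacks, the standard pullback pasting lemma says the left-hand square is a pullback as well. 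That square presents the pullback of a cokernel $p$ along a kernel $h$, so Axiom~\ref{po_pb_ker_coker} forces $g$ to be a kernel (which also re-derives the known fact that the pullback of a kernel is a kernel) and, crucially, $u$ to be a cokernel. Thus $\alpha = \alpha_1 u$ with $\alpha_1$ a kernel and $u$ a cokernel, so Lemma~\ref{ker_coker_same}(iii) yields that $\alpha$ is exact. The pushout half is proved by dualizing the entire argument.

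The main obstacle I anticipate is verifying that the intermediate pullback $E$ actually has two kernel projections and that Axiom~\ref{po_pb_ker_coker} applies cleanly to the resulting left-hand square. Both hinge on a careful bookkeeping of Lemma~\ref{ker_coker_pb_po} (to get kernels out of pullbacks along kernels) and of the pasting law; the rest of the argument is formal once the (kernel)$\circ$(cokernel) factorization of $\alpha$ is in hand.
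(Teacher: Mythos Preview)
Your proposal is correct and follows essentially the same route as the paper: factor $\beta=(\mathrm{im}\,\beta)(\mathrm{coim}\,\beta)$, form the pullback of $f$ and $\mathrm{im}\,\beta$, show the remaining left square is a pullback, invoke Axiom~\ref{po_pb_ker_coker} to get that $u$ (the paper's $\alpha_0$) is a cokernel, and finish with Lemma~\ref{ker_coker_same}(iii). The only cosmetic differences are that you appeal to the standard pullback pasting lemma where the paper verifies the left square is a pullback by hand, and you make explicit that $h$ is a kernel (needed to feed the cospan $(p,h)$ into Axiom~\ref{po_pb_ker_coker}), a point the paper leaves implicit.
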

\begin{proof}
    Suppose that (\ref{comm_square}) is a~pullback, $\beta$ is exact, and $f$ is a~kernel.  We may assume that $\beta = ({\rm im}\, \beta) ({\rm coim}\, \beta)$.  Consider the~diagram
    \begin{equation}\label{pullb-decomp}
    \begin{tikzcd}
        A \ar[r,"\alpha_0",dashed] \ar[d,"g"'] \ar[rr,bend left=45,"\alpha"] & \bullet \ar[r,"\alpha_1"] \ar[d,"h"'] \ar[r, phantom, shift right=4ex, "{\rm PB}" marking] & B \ar[d,"f"]\\
        C \ar[r,"{\rm coim}\, \beta"'{xshift=-3pt}] & \bullet \ar[r,"{\rm im}\, \beta"'] & D
    \end{tikzcd}
    \end{equation}
    where the~right-hand square is a~pullback and $\alpha_0$ is the unique morphism with $\alpha=\alpha_1\alpha_0$ and $({\rm coim}\beta)g=h\alpha_0$.
    We show that the~left-hand square in~(\ref{pullb-decomp}) is also a~pullback. Consider two coinitial morphisms $x$ and $y$ such that $hy = ({\rm coim}\, \beta) x$:
    \[
    \begin{tikzcd}
        \bullet \ar[rrd,"y",bend left=30] \ar[rdd,"x"',bend right=30] \ar[dashed,rd,"z"] & & \\
        & A \ar[r,"\alpha_0"] \ar[d,"g"'] \ar[r, phantom, shift right=4ex, "{\rm PB}" marking] & \bullet \ar[d,"h"'] \ar[r,"\alpha_1"] \ar[r, phantom, shift right=4ex, "{\rm PB}" marking] & B \ar[d,"f"] \\
        & C \ar[r,"{\rm coim}\, \beta"'] \ar[rr,"\beta"',bend right=45] & \bullet \ar[r,"{\rm im}\, \beta"'] & D
    \end{tikzcd}
    \]
    Therefore, $\beta x = ({\rm im}\, \beta) ({\rm coim}\, \beta) x = ({\rm im}\, \beta) hy = f \alpha_1 y$. Since (\ref{comm_square}) is a~pullback, there exists a unique $z$ such that $\alpha_1 y = \alpha z$ and $x = gz$. From the identities $\alpha_1 y = \alpha z = \alpha_1 \alpha_0 z$
    we have $y = \alpha_0 z$. Since the left-hand square in~(\ref{pullb-decomp}) is also a pullback, by Axiom \ref{po_pb_ker_coker}, $\alpha_0$ is a cokernel. Therefore, $\alpha = \alpha_1 \alpha_0$, where $\alpha_0$ is a~cokernel and $\alpha_1$ is a kernel.
    Thus, by Lemma~\ref{ker_coker_same}(iii), $\alpha_1 = {\rm im}\, \alpha$, $\alpha_0 = {\rm coim}\, \alpha$ and $\alpha_1$ is exact.

The~second assertion of~the~lemma is obtained by~duality.    
\end{proof}

The following property is equivalent to~right (left) semiabelianity for~preabelian categories (see~\cite{KoWe2012}). It also holds in~an~ex2-category.

\begin{lemma}\label{ker-cancelation}
    Let $(\mathcalb{C},\mathcalb{N})$ be a semiexact category.
    If kernels in $\mathcalb{C}$ are stable under composition and $\alpha \beta$ is a kernel then $\beta$ is a kernel. Dually, If cokernels in $\mathcalb{C}$ are stable under composition and  $\alpha \beta$ is a cokernel then $\alpha$ is a cokernel.
\end{lemma}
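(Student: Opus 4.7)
The plan is to show that the middle morphism $\overline{\beta}\colon X\to{\rm Im}\,\beta$ in the normal factorization $\beta=({\rm im}\,\beta)\,\overline{\beta}\,({\rm coim}\,\beta)$ is an isomorphism; since ${\rm im}\,\beta$ is a kernel and kernels are stable under composition by hypothesis, this forces $\beta$ itself to be a kernel. The dual assertion will follow by passing to the opposite category.

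First I would verify that $\beta$ is an $\mathcalb{N}$-monomorphism. By Lemma~\ref{ker_coker_same}(i) the kernel $\alpha\beta$ is an exact $\mathcalb{N}$-monomorphism and hence a categorical monomorphism. The ideal property of $\mathcalb{N}$ propagates the $\mathcalb{N}$-mono property to $\beta$: if $\beta h\in\mathcalb{N}$ then $\alpha\beta h\in\mathcalb{N}$, forcing $h\in\mathcalb{N}$. Consequently ${\rm ker}\,\beta\in\mathcalb{N}$, ${\rm coim}\,\beta$ is an isomorphism, and after absorbing it we may write $\beta=({\rm im}\,\beta)\,\overline{\beta}$ with $\overline{\beta}\colon X\to{\rm Im}\,\beta$.

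Next I would construct a retraction of $\overline{\beta}$ from the kernel property of $\alpha\beta$. Lemma~\ref{kop_weg_lemma} gives ${\rm coker}(\alpha\beta)={\rm coker}(\alpha\,{\rm im}\,\beta)$, so ${\rm coker}(\alpha\beta)\cdot\alpha\,{\rm im}\,\beta\in\mathcalb{N}$, and the universal property of the kernel $\alpha\beta={\rm ker}({\rm coker}(\alpha\beta))$ furnishes a unique $s\colon{\rm Im}\,\beta\to X$ with $\alpha\,{\rm im}\,\beta=(\alpha\beta)\,s$. From $\alpha\beta=(\alpha\,{\rm im}\,\beta)\,\overline{\beta}=(\alpha\beta)\,s\,\overline{\beta}$ and the monicity of the kernel $\alpha\beta$, cancellation yields $s\,\overline{\beta}={\rm id}_X$; thus $\overline{\beta}$ is a split monomorphism.

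Finally I would upgrade the split monomorphism property to an isomorphism. Being split, $\overline{\beta}$ is automatically an $\mathcalb{N}$-monomorphism: from $\overline{\beta}h\in\mathcalb{N}$ one infers $h=s\overline{\beta}h\in\mathcalb{N}$. Since kernels are stable under composition by hypothesis, Lemma~\ref{ex2_middle_morphism_th} further gives that $\overline{\beta}$ is an $\mathcalb{N}$-epimorphism. The main obstacle is to promote these two properties to a genuine two-sided inverse. The intended route is to analyze the idempotent $e=\overline{\beta}\,s\in{\rm End}({\rm Im}\,\beta)$: one has $e\,\overline{\beta}=\overline{\beta}$, and since the split epi $s$ is an $\mathcalb{N}$-epimorphism, Lemma~\ref{lemma_n_mono_ker_null_and_dual}(iii) gives ${\rm coker}\,e={\rm coker}(\overline{\beta}\,s)={\rm coker}\,\overline{\beta}\in\mathcalb{N}$; this forces ${\rm im}\,e={\rm ker}({\rm coker}\,e)={\rm id}_{{\rm Im}\,\beta}$, the normal decomposition of $e$ collapses, and $e={\rm id}_{{\rm Im}\,\beta}$. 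Hence $\overline{\beta}\,s={\rm id}$, so $\overline{\beta}$ is invertible, and $\beta=({\rm im}\,\beta)\,\overline{\beta}$ is a kernel.
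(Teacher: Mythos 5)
Your first two steps are sound and in places sharper than the paper's own argument: $\beta$ is an $\mathcalb{N}$-monomorphism, ${\rm coim}\,\beta$ is invertible, and the retraction $s$ with $s\overline{\beta}={\rm id}$ is obtained by cancelling $\alpha\beta$, which really is a categorical monomorphism by the uniqueness clause in the definition of a kernel. Up to notation this reproduces the paper's pair $p=\overline{\beta}({\rm coim}\,\beta)$, $q=s$, and your derivation of $s\overline{\beta}={\rm id}$ is the cleanest part of either argument.

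The gap is in the final step, where you pass from ${\rm im}\,e={\rm id}_{{\rm Im}\,\beta}$ to ``the normal decomposition of $e$ collapses, and $e={\rm id}$.'' What you have actually established is that ${\rm coker}\,e\in\mathcalb{N}$, i.e.\ that $e$ is an $\mathcalb{N}$-epimorphism; the normal decomposition then collapses only to $e=\overline{e}\,({\rm coim}\,e)$. To conclude that $e$ is an isomorphism (after which idempotency does force $e={\rm id}$) you would additionally need ${\rm ker}\,e\in\mathcalb{N}$ and $\overline{e}$ invertible, and neither is available: by Lemma~\ref{lemma_n_mono_ker_null_and_dual}, ${\rm ker}\,e={\rm ker}(\overline{\beta}s)={\rm ker}\,s$, and a split epimorphism such as $s$ need not be an $\mathcalb{N}$-monomorphism; moreover a morphism with null kernel and null cokernel need not be invertible in a semiexact category --- that is exactly what a non-exact morphism is. So the assertion ``split monomorphism which is an $\mathcalb{N}$-epimorphism is an isomorphism'' is precisely the nontrivial content still to be proved, and the idempotent detour does not supply it. The paper instead gets the composite $pq={\rm id}$ by cancelling $\alpha\,({\rm im}\,\beta)$ in the identity $\alpha\,({\rm im}\,\beta)=\alpha\,({\rm im}\,\beta)\,pq$; to close your argument you would need to justify a cancellation of that kind, or else show directly that $s$ is an $\mathcalb{N}$-monomorphism and that $e$ is exact.
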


\begin{proof}
    Let $\alpha \beta$ be a kernel.
    Present~$\beta$ as the composition $\beta = i p$, where $i = {\rm im}\, \beta$ is a kernel and $p = \overline{\beta}({\rm coim}\, \beta)$ is an~$\mathcalb{N}$-epimorphism.
    By Lemma~\ref{ex2_middle_morphism_th}, $\overline{\beta}$ is an~$\mathcalb{N}$-epimorphism. Therefore,
    $\alpha \beta = {\rm im}(\alpha \beta) = {\rm im}(\alpha i)$.
    Note that $\alpha \beta = \alpha i p$ and $\alpha i = \alpha \beta q$, where $q = \overline{\alpha i}\ {\rm coim}(\alpha i)$.
    Thus, $\alpha i = \alpha i p q$ and $p q = {\rm id}$ since $\alpha i$ is an~$\mathcalb{N}$-monomorphism.
    Note that $\alpha i p = \alpha \beta$ is an~$\mathcalb{N}$-monomorphism, and hence $p$ is an~$\mathcalb{N}$-monomorphism.
    Therefore, $p q p = p$ and $qp = {\rm id}$, because $p$ is an~$\mathcalb{N}$-monomorphism.
    We conclude that $p$ is an isomorphism, which means that $\beta$ is a kernel.
    The second assertion is obtained by duality.
\end{proof}

Consider a commutative square (\ref{comm_square}) in a semiexact category.
Note that
$\beta g ({\rm ker}\, \alpha) = f \alpha ({\rm ker}\, \alpha) \in \mathcalb{N}$,
and so there exists a unique morphism $\widehat{g}\colon {\rm Ker}\, \alpha \to {\rm Ker}\, \beta$ such that $(\ker\beta)\widehat{g}=g\ker\alpha$, which we call \emph{the~morphism~of~kernels~induced~by~$g$}. 
Dually, there exists a unique morphism of cokernels $\widehat{f}\colon {\rm Coker}\, \alpha \to {\rm Coker}\, \beta$ such that
$\widehat{f}{\rm coker}\,\alpha= ({\rm coker}\,\beta)f$, which we call \emph{the~morphism~of~cokernels~induced~by~$f$}.
\[
    \begin{tikzcd}
        {\rm Ker}\, \alpha \ar[r,"{\rm ker}\, \alpha",tail] \ar[d,dashed,"\widehat{g}"'] & A \ar[r,"\alpha"] \ar[d,"g"'] & B \ar[r,"{\rm coker}\, \alpha",two heads] \ar[d,"f"] & {\rm Coker}\, \alpha \ar[d,"\widehat{f}",dashed] \\
        {\rm Ker}\, \beta \ar[r,"{\rm ker}\, \beta"',tail] & C \ar[r,"\beta"'] & D \ar[r,"{\rm coker}\, \beta"',two heads] & {\rm Coker}\, \beta
    \end{tikzcd}
\]

\begin{lemma}\label{lemma_morphism_of_kernel_cokernels}
    If {\rm \eqref{comm_square}} is a pullback in a homological category $(\mathcalb{C},\mathcalb{N})$, $\beta$ is exact, and $f$ is a kernel then the morphism of cokernels $\widehat{f}$ induced by $f$ is an~$\mathcalb{N}$-monomorphism.
    Dually, if {\rm \eqref{comm_square}} is a pushout, the morphism $\alpha$ is exact, and $g$ is cokernel then the morphism of kernels $\widehat{g}$ induced by $g$ is an~$\mathcalb{N}$-epimorphism.
\end{lemma}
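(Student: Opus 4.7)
I plan to prove the pullback assertion; the pushout assertion follows by duality in the opposite homological category. The first move is to invoke Lemma~\ref{lemma_pullback_of_exact_is_exact}, which yields the decomposition~\eqref{pullb-decomp} $\alpha = \alpha_1 \alpha_0$ with $\alpha_0 = \mathrm{coim}\,\alpha$ a cokernel, $\alpha_1 = \mathrm{im}\,\alpha$ a kernel, and in which the right sub-square
\[
\begin{tikzcd}
P \ar[r,"\alpha_1",tail]\ar[d,"h"'] & B \ar[d,"f",tail] \\
Q \ar[r,"\mathrm{im}\,\beta"',tail] & D
\end{tikzcd}
\]
is a pullback. Put $q := \mathrm{coker}\,\alpha$ and $q' := \mathrm{coker}\,\beta$; since $\alpha_0$ and $\mathrm{coim}\,\beta$ are $\mathcalb{N}$-epimorphisms, Lemma~\ref{lemma_n_mono_ker_null_and_dual} gives $q = \mathrm{coker}\,\alpha_1$ and $q' = \mathrm{coker}(\mathrm{im}\,\beta)$, whence $\mathrm{im}\,\beta = \mathrm{ker}\,q'$. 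Applying Lemma~\ref{ker_coker_pb_po} to the kernel $\mathrm{im}\,\beta = \mathrm{ker}\,q'$ pulled back along $f$, the top edge of the pullback above is identified with $\mathrm{ker}(q'f)$; by uniqueness of pullbacks this reads $\alpha_1 = \mathrm{ker}(q'f)$. Combined with the defining identity $\widehat{f}\,q = q'\,f$, this furnishes the key identification
\[
\alpha_1 \;=\; \mathrm{ker}(\widehat{f}\,q).
\]

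With this in hand, set $k := \mathrm{ker}\,\widehat{f}$; by Lemma~\ref{lemma_n_mono_ker_null_and_dual} it suffices to prove $k \in \mathcalb{N}$. Since $k$ is a kernel and $q$ is a cokernel, Axiom~\ref{po_pb_ker_coker} produces a pullback
\[
\begin{tikzcd}
P' \ar[r,"k'",two heads]\ar[d,"\pi"',tail] & \mathrm{Ker}\,\widehat{f} \ar[d,"k",tail] \\
B \ar[r,"q"',two heads] & \mathrm{Coker}\,\alpha
\end{tikzcd}
\]
in which $\pi$ is a kernel and $k'$ is a cokernel. Since $\widehat{f}\,k \in \mathcalb{N}$, the composite $\widehat{f}\,q\,\pi = \widehat{f}\,k\,k'$ also lies in $\mathcalb{N}$, so by the identification above $\pi$ factors through $\alpha_1$, say $\pi = \alpha_1 \pi''$. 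Hence
\[
k\,k' \;=\; q\,\pi \;=\; q\,\alpha_1\,\pi'' \;\in\; \mathcalb{N},
\]
because $q\,\alpha_1 = (\mathrm{coker}\,\alpha_1)\,\alpha_1 \in \mathcalb{N}$. Since $k'$ is a cokernel and therefore an $\mathcalb{N}$-epimorphism, Lemma~\ref{lemma_n_mono_ker_null_and_dual} forces $k \in \mathcalb{N}$, which completes the argument.

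The main obstacle is the first step: the hypothesis tells us only that $\beta$ is exact and $f$ is a kernel, so we must first pass to the ``image part'' of the square supplied by Lemma~\ref{lemma_pullback_of_exact_is_exact} in order to be in the situation of a pullback of two kernels — precisely the configuration needed for Lemma~\ref{ker_coker_pb_po} to deliver $\mathrm{im}\,\alpha = \mathrm{ker}(\widehat{f}\,\mathrm{coker}\,\alpha)$. Once this identification is made, the remainder is a short chase that uses Axiom~\ref{po_pb_ker_coker} to lift $k$ back to $B$ through $q$ by a kernel/cokernel pair, and the null condition propagates through $\alpha_1$. The dual statement is obtained by reversing all arrows.
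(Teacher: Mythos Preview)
Your proof is correct and follows essentially the same route as the paper's: both invoke Lemma~\ref{lemma_pullback_of_exact_is_exact} to obtain the pullback square on $(\mathrm{im}\,\beta,f)$ identifying $\mathrm{im}\,\alpha$ with $\mathrm{ker}(q'f)=\mathrm{ker}(\widehat{f}\,q)$, then pull a kernel back along $q=\mathrm{coker}\,\alpha$ (existence via Lemma~\ref{ker_coker_pb_po}, form via Axiom~\ref{po_pb_ker_coker}) and chase the null condition through $\mathrm{im}\,\alpha$. The only cosmetic difference is that you work directly with $k=\mathrm{ker}\,\widehat{f}$ and conclude via Lemma~\ref{lemma_n_mono_ker_null_and_dual}, whereas the paper tests an arbitrary $x$ with $\widehat{f}x\in\mathcalb{N}$ by first passing to $\mathrm{im}\,x$.
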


\begin{proof}
    Suppose that the square is a pullback and $\beta$ is exact, i.e. $\beta = ({\rm im}\, \beta) ({\rm coim}\, \beta)$.
    Hence, by Lemma \ref{lemma_pullback_of_exact_is_exact} the morphism $\alpha$ is exact with $\alpha = ({\rm im}\, \alpha) ({\rm coim}\, \alpha)$.
    Let $x\colon X \to {\rm Coker}\, \alpha$ be a morphism such that $\widehat{f} x \in \mathcalb{N}$, hence, $\widehat{f}({\rm im}\, x) \in \mathcalb{N}$.
    Consider the pullback 
    \[
    \begin{tikzcd}
        P \ar[r,two heads,"\gamma"] \ar[d,tail,"y"'] \ar[r, phantom, shift right=4ex, "{\rm PB}" marking] & {\rm Im}\, x \ar[d,tail,"{\rm im}\, x"]\\
        B \ar[r,two heads,"{\rm coker}\, \alpha"'] & {\rm Coker}\, \alpha
    \end{tikzcd}
    \]
     (which exists due to Lemma~\ref{ker_coker_pb_po}).
    Note that $({\rm coker}\, \beta) f y = \widehat{f} ({\rm im}\, x) \gamma \in \mathcalb{N}$.
    Hence, there exists a unique morphism $z\colon P \to {\rm Im}\, \beta$ such that $({\rm im}\, \beta) z = f y$.
    Consider the pullback of the pair $(f, {\rm im}\, \beta)$.
    Note that the pullback of ${\rm im}\, \beta$ along~$f$ is ${\rm im}\, \alpha$ (see the proof of Lemma \ref{lemma_pullback_of_exact_is_exact}).
    We denote the pullback of $f$ along ${\rm im}\, \beta$ by $h$.
    By the universal property of a pullback there exists a unique morphism $t\colon P \to {\rm Im}\, \alpha$ such that $z = h t$ and $y = ({\rm im}\, \alpha) t$.
    Note that $({\rm im}\, x) \gamma = ({\rm coker}\, \alpha) ({\rm im}\, \alpha) t \in \mathcalb{N}$ and $\gamma$ is an~$\mathcalb{N}$-epimorphism by Axiom~\ref{po_pb_ker_coker}.
    Hence, ${\rm im}\, x \in \mathcalb{N}$ and $x \in \mathcalb{N}$.
    Therefore, $\widehat{f}$ is an~$\mathcalb{N}$-monomorphism.
    The diagram for the proof looks as follows:
    \[
    \begin{tikzcd}
        & & X \ar[d,"\overline{x} ({\rm coim}\, x)"' near start] \ar["x",dd,bend left=75] \\
        & P \ar[r,"\gamma",two heads] \ar[d,"y"',tail] \ar[r, phantom, shift right=4ex, "{\rm PB}" marking] \ar["z"',ldd,dashed,bend right=90] \ar[ld,"t",dashed,bend right=45] & {\rm Im}\,X \ar[d,"{\rm im}\, x",tail] \\
        {\rm Im}\, \alpha \ar[r,"{\rm im}\, \alpha",tail] \ar[d,"h"] \ar[r, phantom, shift right=4ex, "{\rm PB}" marking] & \bullet \ar[r,"{\rm coker}\, \alpha"',two heads] \ar[d,"f"] & {\rm Coker}\, \alpha \ar[d,"\widehat{f}"] \\
        {\rm Im}\, \beta \ar[r,"{\rm im}\, \beta"',tail] & \bullet \ar[r,"{\rm coker}\, \beta"',two heads] & {\rm Coker}\, \beta
    \end{tikzcd}
    \]
    The second assertion can be obtained by duality.
\end{proof}

\begin{lemma}[The Composition Lemma]\label{composition_lemma}
    In a homological category $(\mathcalb{C},\mathcalb{N})$, suppose that the composition $gf$ of two morphisms $f$ and $g$ is defined.
    Then there exists a null-sequence    \begin{equation}\label{composition_lemma_sequence}
    \begin{tikzcd}
        \bullet \ar[r,"{\rm null}"] & \mathrm{Ker}\, f \ar[r,"\varphi"] & \mathrm{Ker}(gf) \ar[r,"\psi"] \ar[d, phantom, ""{coordinate, name=Z}] & {\rm Ker}\, g \ar[
            dll,
            "\chi"',
            rounded corners,
            to path={ -- ([xshift=2ex]\tikztostart.east)
            |- (Z) [near end]\tikztonodes
            -| ([xshift=-2ex]\tikztotarget.west) 
            -- (\tikztotarget)}
        ] & \\
        & \mathrm{Coker}\, f \ar[r,"\varepsilon"] & \mathrm{Coker}(gf) \ar[r,"\omega"] & \mathrm{Coker}\, g \ar[r,"{\rm null}"] & \bullet
    \end{tikzcd}
    \end{equation}
    which is exact at $\mathrm{Ker}\, f$, $\mathrm{Ker}(gf)$, $\mathrm{Coker}(gf)$, $\mathrm{Coker}\, g$.
    Moreover,
    \begin{enumerate}
        \item
        $\varphi$ and $\omega$ are exact morphisms;
        \item
        if $f$ is exact morphism then {\rm \eqref{composition_lemma_sequence}} is exact at $\mathrm{Ker}\, g$ and $\psi$ is exact morphism.
        Dually, if $g$ is an exact morphism then {\rm \eqref{composition_lemma_sequence}} is exact at $\mathrm{Coker}\, f$ and $\varepsilon$ is exact morphism.
    \end{enumerate}
\end{lemma}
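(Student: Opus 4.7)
My plan is to define each of the five morphisms by a universal property, verify that the sequence is a null-sequence, then handle the four required exactness assertions, and finally address the ``moreover'' statements using the pullback $\mathrm{Ker}(gf)\to\mathrm{Ker}\,g$ and the earlier lemmas.

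\emph{Construction and null-sequence.} Since $(gf)\,\mathrm{ker}\,f$ is null, the universal property of $\mathrm{ker}(gf)$ produces a unique $\varphi$ with $\mathrm{ker}(gf)\,\varphi = \mathrm{ker}\,f$; since $g\bigl(f\,\mathrm{ker}(gf)\bigr)$ is null, the universal property of $\mathrm{ker}\,g$ produces a unique $\psi$ with $(\mathrm{ker}\,g)\,\psi = f\,\mathrm{ker}(gf)$; dually, $\varepsilon,\omega$ are defined by $\varepsilon\,\mathrm{coker}\,f = \mathrm{coker}(gf)\,g$ and $\omega\,\mathrm{coker}(gf) = \mathrm{coker}\,g$. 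I set $\chi := (\mathrm{coker}\,f)(\mathrm{ker}\,g)$. Direct computation using that $\mathrm{ker}\,g$ is an $\mathcalb{N}$-monomorphism and $\mathrm{coker}\,f$ is an $\mathcalb{N}$-epimorphism shows that each consecutive composition is null.

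\emph{Endpoint exactness and exactness of $\varphi,\omega$.} Since $\mathrm{ker}(gf)\,\varphi = \mathrm{ker}\,f$ is a kernel and Axiom~\ref{third_axiom} ensures kernels are closed under composition, Lemma~\ref{ker-cancelation} gives that $\varphi$ is itself a kernel, hence exact by Lemma~\ref{ker_coker_same}(i) and an $\mathcalb{N}$-monomorphism; by Lemma~\ref{lemma_n_mono_ker_null_and_dual} this makes $\mathrm{ker}\,\varphi$ null, which is exactness at $\mathrm{Ker}\,f$. Dualizing handles $\omega$ and exactness at $\mathrm{Coker}\,g$. For exactness at $\mathrm{Ker}(gf)$ I show directly that $\mathrm{ker}\,\psi = \varphi$: if $\psi x$ is null then $f\,\mathrm{ker}(gf)\,x = (\mathrm{ker}\,g)\,\psi x$ is null, so $\mathrm{ker}(gf)\,x$ factors through $\mathrm{ker}\,f = \mathrm{ker}(gf)\,\varphi$; cancelling the monomorphism $\mathrm{ker}(gf)$ shows $x$ factors through $\varphi$, and combined with $\psi\varphi$ null this gives $\mathrm{im}\,\varphi = \varphi = \mathrm{ker}\,\psi$. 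Exactness at $\mathrm{Coker}(gf)$ is obtained by the dual argument (any $x$ with $x\varepsilon$ null is shown to factor through $\omega$, using that $\mathrm{coker}(gf)$ is an epimorphism).

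\emph{The moreover statements.} By Lemma~\ref{ker_coker_pb_po} applied to the kernel $\mathrm{ker}\,g$ pulled back along $f$, the square
$$
\begin{tikzcd}
    \mathrm{Ker}(gf) \ar[r,"\psi"] \ar[d,tail,"\mathrm{ker}(gf)"'] & \mathrm{Ker}\,g \ar[d,tail,"\mathrm{ker}\,g"] \\
    A \ar[r,"f"'] & B
\end{tikzcd}
$$
is a pullback. When $f$ is exact, Lemma~\ref{lemma_pullback_of_exact_is_exact} applied to this square (with $\mathrm{ker}\,g$ as the right-hand kernel and $f$ as the exact bottom morphism) gives that $\psi$ is exact. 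Lemma~\ref{lemma_morphism_of_kernel_cokernels} applied to the same pullback produces an $\mathcalb{N}$-monomorphism $\widehat{\mathrm{ker}\,g}\colon \mathrm{Coker}\,\psi \to \mathrm{Coker}\,f$ satisfying $\widehat{\mathrm{ker}\,g}\,\mathrm{coker}\,\psi = (\mathrm{coker}\,f)(\mathrm{ker}\,g) = \chi$. Lemma~\ref{lemma_n_mono_ker_null_and_dual}(iii) then yields $\mathrm{ker}\,\chi = \mathrm{ker}(\mathrm{coker}\,\psi) = \mathrm{im}\,\psi$, i.e.\ exactness at $\mathrm{Ker}\,g$. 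The case $g$ exact is dual, replacing the pullback with the pushout $\mathrm{Coker}\,f \to \mathrm{Coker}(gf)$ coming from the dual half of Lemma~\ref{ker_coker_pb_po}.

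The main obstacle I expect is orienting the pullback square so that the hypotheses of Lemmas~\ref{lemma_pullback_of_exact_is_exact} and~\ref{lemma_morphism_of_kernel_cokernels} are matched correctly: those lemmas fix the kernel on the right and the exact morphism on the bottom, whereas the pullback furnished by Lemma~\ref{ker_coker_pb_po} is naturally read with the kernels on the vertical sides. Once the square is transposed so that $\mathrm{ker}\,g$ sits on the right and $f$ on the bottom, the two lemmas apply essentially mechanically, and the identification $\widehat{\mathrm{ker}\,g}\,\mathrm{coker}\,\psi = \chi$ is what converts ``$\widehat{\mathrm{ker}\,g}$ is an $\mathcalb{N}$-monomorphism'' into the desired exactness at $\mathrm{Ker}\,g$.
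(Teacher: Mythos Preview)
Your proof is correct and follows essentially the same route as the paper: the same universal-property definitions of $\varphi,\psi,\chi,\varepsilon,\omega$, the use of Lemma~\ref{ker-cancelation} to get $\varphi$ a kernel and the direct verification that $\varphi=\mathrm{ker}\,\psi$, and then, for the ``moreover'' clause, the pullback square from Lemma~\ref{ker_coker_pb_po} combined with Lemmas~\ref{lemma_pullback_of_exact_is_exact} and~\ref{lemma_morphism_of_kernel_cokernels} to obtain $\chi=\widehat{\mathrm{ker}\,g}\,(\mathrm{coker}\,\psi)$ with $\widehat{\mathrm{ker}\,g}$ an $\mathcalb{N}$-monomorphism. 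The orientation concern you raise is only notational; once the square is read with $\mathrm{ker}\,g$ on the right and $f$ on the bottom, the cited lemmas apply verbatim, exactly as the paper does.
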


\begin{proof}
    We define $\varphi$, $\psi$, $\varepsilon$ and $\omega$ by the following equalities that follow from the universal property of a (co)kernel:
    ${\rm ker}f = ({\rm ker}(gf)) \varphi$,\ $f ({\rm ker}(gf)) = ({\rm ker}\, g) \psi$,\ $({\rm coker}(gf)) g = \varepsilon ({\rm coker}\, f)$,\ ${\rm coker}\, g = \omega ({\rm coker}(gf))$.
    We define the middle morphism $\chi$ in the diagram as $\chi := ({\rm coker}\, f) ({\rm ker}\, g)$.
    By Lemma~\ref{ker_coker_pb_po}, the second equality $f ({\rm ker}(gf)) = ({\rm ker}\, g) \psi$ forms a pullback.
    Dually, the third equality $({\rm coker}(gf)) g = \varepsilon ({\rm coker}\, f)$ forms a pushout.
    \[
    \begin{tikzcd}
        & & {\rm Coker}\, f \ar["\varepsilon",r,dashed] & {\rm Coker}(gf) \ar["\omega",rd,dashed,bend left=25]\\
        {\rm Ker}\, f \ar["{\rm ker}\, f",r,tail] \ar[rd,"\varphi"',dashed,bend right=25] & \bullet \ar["f",r] \ar[r, phantom, shift right=4ex, "{\rm PB}" marking] & \bullet \ar["g"',r] \ar["{\rm coker}\, f",u,two heads] \ar[r, phantom, shift left=4ex, "{\rm PO}" marking] & \bullet \ar["{\rm coker}\, g"',r,two heads] \ar["{\rm coker}(gf)"',u,two heads] & {\rm Coker}\, g\\
        & {\rm Ker}(gf) \ar["{\rm ker}(gf)",u,tail] \ar["\psi"',r,dashed] & {\rm Ker}\, g \ar["{\rm ker}\, g"',u,tail]
    \end{tikzcd}
    \]
    Since kernels (cokernels) are $\mathcalb{N}$-monomorphisms ($\mathcalb{N}$-epimorphisms), $\psi \varphi$, $\omega \varepsilon \in \mathcalb{N}$.
    Note that
    $$\chi \psi = ({\rm coker}\, f) ({\rm ker}\, g) \psi = ({\rm coker}\, f) f ({\rm ker}(gf)) \in \mathcalb{N}.$$
    Dually, $\varepsilon \chi \in \mathcalb{N}$.
    Hence, the morphisms $\varphi$, $\psi$, $\chi$, $\varepsilon$, $\omega$ form a null-sequence.

    It is obvious that the sequence is exact at ${\rm Ker}\, f$ and ${\rm Coker}\, g$.
    Observe that ${\rm ker}\, f = ({\rm ker}(gf)) \varphi$.
    Hence, by Lemma \ref{ker-cancelation}, $\varphi$ is a kernel.
    Suppose that $x\colon X \to {\rm Ker}(gf)$ is a morphism such that $\psi x \in \mathcalb{N}$.
    Hence, $({\rm ker}\, g) \psi x = f ({\rm ker}(gf)) x \in \mathcalb{N}$ and there exists a unique $t\colon X \to {\rm Ker}\, f$ such that $({\rm ker}(gf)) x = ({\rm ker}\, f) t$.
    Therefore, $({\rm ker}(gf)) x = ({\rm ker}(gf)) \varphi t$, $x = \varphi t$ and $\varphi = {\rm ker}\, \psi$.
    Now observe that ${\rm im}\, \varphi = \varphi = {\rm ker}\, \psi$, because $\varphi$ is a kernel.
    Hence, the sequence is exact at ${\rm Ker}(gf)$.
    Dually, it is exact at ${\rm Coker}(gf)$.

    Suppose that $f$ is exact.
    Hence, its pullback $\psi$ is exact and $\psi = ({\rm im}\, \psi) ({\rm coim}\, \psi)$.
    Consider a morphism $l\colon {\rm Coker}\, \psi \to {\rm Coker}\, f$ of cokernels in the following square induced by ${\rm ker}\, g$:
    \[
    \begin{tikzcd}
        \bullet \ar[r,"f"] & \bullet \ar[r,"{\rm coker}\, f",two heads] & {\rm Coker}\, f\\
        \bullet \ar[u,"{\rm ker}(gf)",tail] \ar[r,"\psi"'] & \bullet \ar[u,"{\rm ker}\, g",tail] \ar[ur,"\chi"] \ar[r,"{\rm coker}\, \psi"',two heads] & {\rm Coker}\, \psi \ar[u,"l"']
    \end{tikzcd}
    \]
    Note that, by Lemma~\ref{lemma_morphism_of_kernel_cokernels}, the morphism $l$ is an~$\mathcalb{N}$-monomorphism.
    By commutativity we have $\chi = l ({\rm coker}\, \psi)$, hence, ${\rm ker}\, \chi = {\rm ker}(l ({\rm coker}\, \psi)) = {\rm im}\, \psi$.
    Therefore, (\ref{composition_lemma_sequence}) is exact at ${\rm Ker}\, g$.
    Dually, if $g$ is exact then $\varepsilon$ is exact and (\ref{composition_lemma_sequence}) is exact at ${\rm Coker}\, f$.
\end{proof}

\section{Lambek's theorem in categories with null morphisms}\label{sec-lamb}

In a semiexact category $(\mathcalb{C}, \mathcalb{N})$, consider the square denoted by $S$:
\begin{equation}
\begin{tikzcd}
    A \ar[r,"f"] \ar[d,"a"'] \ar[r, phantom, shift right=4ex, "S" marking] & B \ar[d,"b"] \\
    C \ar[r,"g"'] & D
\end{tikzcd}
\end{equation}
In 1964 J.~Lambek \cite{Lambek-1964} defined the invariants ${\rm Im}\, S$ and ${\rm Ker}\, S$ of the commutative square $S$ in the category of groups.
We generalize these invariants for the case of categories with null morphisms as ${\rm Im}\, S := {\rm Coker}\, \lambda_S$, ${\rm Ker}\, S := {\rm Ker}\, \rho_S$, where $\lambda_S$ and $\rho_S$ are the morphisms defined by the universal property of a~pullback and a~pushout:
\[
\begin{tikzcd}
    & A \ar[dd,"a"'] \ar[rr,"f"] \ar[rd,dashed,"\lambda_S"] & & B \ar[d]\\
    {\rm Im}\, S & & \bullet \ar[r,"l_S",tail] \ar[d] \ar[ll,"{\rm coker}\, \lambda_S"{xshift=-30pt},bend left=30,two heads] \ar[r, phantom, shift right=4ex, "{\rm PB}" marking] & \bullet \ar[d,"{\rm im}\, b",tail]\\
    & C \ar[r] & \bullet \ar[r,"{\rm im}\, g"',tail] & D
\end{tikzcd}
\quad
\begin{tikzcd}
    A \ar[d,"{\rm coim}\, a"',two heads] \ar[r,"{\rm coim}\, f",two heads] \ar[r, phantom, shift right=4ex, "{\rm PO}" marking] & \bullet \ar[r] \ar[d] & B \ar[dd,"b"] &\\
    \bullet \ar[d] \ar[r,"r_S"',two heads] & \bullet \ar[rd,dashed,"\rho_S"] & & {\rm Ker}\, S \ar[ll,"{\rm ker}\, \rho_S"'{xshift=30pt},bend right=30,tail] \\
    C \ar[rr,"g"'] & & D &
\end{tikzcd}
\]
Consider the diagram of two consecutive squares:
\begin{equation}\label{diagram1}
\begin{tikzcd}
    A \ar[r,"f"] \ar[d,"a"'] \ar[r, phantom, shift right=4ex, "S" marking] & B \ar[r,"g"] \ar[d,"b" description] \ar[r, phantom, shift right=4ex, "T" marking] & C \ar[d,"c"]\\
    A' \ar[r,"f'"'] & B' \ar[r,"g'"'] & C'
\end{tikzcd}
\end{equation}

The following theorem holds:

\begin{theorem}\label{lambek_ex_th}
Suppose that in diagram~{\rm \eqref{diagram1}} in a semiexact category $(\mathcalb{C},\mathcalb{N})$, the morphism $b$ is exact and the compositions $gf$ and $g'f'$ are null.
Then there exists a unique morphism $\Lambda\colon \mathrm{Im}\, S \to \mathrm{Ker}\, T$ such that $r_T l_S = (\mathrm{ker}\, \rho_T) \Lambda (\mathrm{coker}\, \lambda_S)$.
\end{theorem}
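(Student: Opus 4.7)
The plan is to produce $\Lambda$ by factoring the composite $r_T l_S$ on the source side through the cokernel $\mathrm{coker}\, \lambda_S$ and on the target side through the kernel $\mathrm{ker}\, \rho_T$, invoking the respective universal properties in turn. Let $P_S$ denote the pullback object in the definition of $\mathrm{Im}\, S$ and $Q_T$ the pushout object in the definition of $\mathrm{Ker}\, T$. Setup remark: the exactness of $b$ is precisely what makes $r_T l_S$ well-defined, since $l_S$ lands in $\mathrm{Im}\, b$ while $r_T$ emanates from $\mathrm{Coim}\, b$; as explained after Lemma~\ref{ker_coker_same}, we may choose representatives so that $\mathrm{Im}\, b = \mathrm{Coim}\, b$ and $b = (\mathrm{im}\, b)(\mathrm{coim}\, b)$, making the composite $r_T l_S \colon P_S \to Q_T$ meaningful.

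For the first factorization, I would show $r_T l_S \lambda_S \in \mathcalb{N}$. By the pullback universal property, $l_S \lambda_S$ is the unique factorization of $bf$ through $\mathrm{im}\, b$, which under the exactness identification is $(\mathrm{coim}\, b)\, f$. The pushout commutativity gives $r_T\, (\mathrm{coim}\, b) = r_T'\, (\mathrm{coim}\, g)$, with $r_T'$ the other pushout leg, so $r_T l_S \lambda_S = r_T'\, (\mathrm{coim}\, g)\, f$. By Lemma~\ref{kop_weg_lemma}, $\mathrm{ker}((\mathrm{coim}\, g)\, f) = \mathrm{ker}(gf)$; since $gf \in \mathcalb{N}$ this kernel equals $\mathrm{id}_A$, forcing $(\mathrm{coim}\, g)\, f$ and hence $r_T l_S \lambda_S$ to be null. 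The universal property of $\mathrm{coker}\, \lambda_S$ then yields a unique $\Lambda' \colon \mathrm{Im}\, S \to Q_T$ with $\Lambda'\, (\mathrm{coker}\, \lambda_S) = r_T l_S$.

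For the second factorization, I would show $\rho_T \Lambda' \in \mathcalb{N}$; since $\mathrm{coker}\, \lambda_S$ is an $\mathcalb{N}$-epimorphism it suffices to check $\rho_T r_T l_S \in \mathcalb{N}$. The pushout universal property gives $\rho_T r_T = g'\, (\mathrm{im}\, b)$, and the pullback commutativity gives $(\mathrm{im}\, b)\, l_S = (\mathrm{im}\, f')\, l_S'$, where $l_S'$ is the other pullback leg, so $\rho_T r_T l_S = g'\, (\mathrm{im}\, f')\, l_S'$. The dual of Lemma~\ref{kop_weg_lemma} applied to the null morphism $g' f'$ gives $\mathrm{coker}(g'\, \mathrm{im}\, f') = \mathrm{coker}(g' f') = \mathrm{id}$, so $g'\, \mathrm{im}\, f' \in \mathcalb{N}$ and the desired vanishing follows. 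The universal property of $\mathrm{ker}\, \rho_T$ then produces the unique $\Lambda$ with $(\mathrm{ker}\, \rho_T)\, \Lambda = \Lambda'$, and uniqueness of $\Lambda$ follows by cancelling $\mathrm{coker}\, \lambda_S$ (an epimorphism, by the uniqueness clause in its universal property) on the right and $\mathrm{ker}\, \rho_T$ (a monomorphism, dually) on the left. The main obstacle is purely bookkeeping: correctly identifying $l_S \lambda_S$ and $\rho_T r_T$ through the relevant universal properties and using exactness of $b$ to make $r_T l_S$ composable; with this setup, the proof reduces to a symmetric application of Lemma~\ref{kop_weg_lemma} on both sides, with $gf \in \mathcalb{N}$ supplying the kernel-side vanishing and $g' f' \in \mathcalb{N}$ the cokernel-side one.
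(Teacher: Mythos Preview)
Your proof is correct and follows essentially the same route as the paper: both establish that $r_T l_S \lambda_S$ and $\rho_T r_T l_S$ are null (via the null-row hypotheses, which you justify explicitly through Lemma~\ref{kop_weg_lemma}) and then factor $r_T l_S$ through $\mathrm{coker}\,\lambda_S$ and $\mathrm{ker}\,\rho_T$ using the respective universal properties. The only cosmetic difference is that the paper carries out the two factorizations in parallel---producing intermediate morphisms $\alpha$ and $\beta$ and then verifying that their further factorizations $\alpha'$ and $\beta'$ agree---whereas you factor sequentially (first through the cokernel, then through the kernel), which is slightly more economical.
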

\begin{proof}
    For brevity, we put $\lambda := \lambda_S$, $l := l_S$, $r := r_T$, and $\rho := \rho_T$.
    Since $b$ is exact, it can be presented as $b = ({\rm im}\, b) ({\rm coim}\, b)$.
    The~existence of pullbacks for pairs of kernels and of pushouts for pairs of cokernels yields the~commutative diagram:
    \begin{equation}\label{diagram2}
    \begin{tikzcd}[column sep=large]
        A \ar[rr,"f"] \ar[dd,"a"'] \ar[rd,"\lambda"',dashed] & & B \ar[r,"{\rm coim}\, g",two heads] \ar[d,"{\rm coim}\, b"',two heads] & {\rm Coim}\, g \ar[r,"({\rm im}\, g) \overline{g}"] \ar[d,"t"] & C \ar[dd,"c"]\\
        & M \ar[r, phantom, shift right=4ex, "{\rm PB}" marking] \ar[r,"l",tail] \ar[d,"s"'] & \bullet \ar[d,"{\rm im}\, b",tail] \ar[r,"r"',two heads] \ar[r, phantom, shift left=4ex, "{\rm PO}" marking] & N \ar[rd,"\rho",dashed] & \\
        A' \ar[r,"\overline{f'} ({\rm coim}\, f')"'] & {\rm Im}\, f' \ar[r,"{\rm im}\, f'"',tail] & B' \ar[rr,"g'"'] & & C'
    \end{tikzcd}
    \end{equation}
    Note that here
    $$
    (rl)\lambda = t \underbrace{({\rm coim}\, g) f}_{{\rm null}},
    \quad
    \rho (rl) = \underbrace{g' ({\rm im}\, f')}_{{\rm null}} s.
    $$
    Hence, there exist $\alpha\colon {\rm Coker}\, \lambda \to N$ and $\beta\colon M \to {\rm Ker}\, \rho$ such that
    $$
    \alpha ({\rm coker}\, \lambda) = rl = ({\rm ker}\, \rho) \beta.
    $$
    Observe that
    $$
    \rho (rl) = \rho \alpha ({\rm coker}\, \lambda),
    \quad
    ({\rm ker}\, \rho) \beta \lambda = (rl) \lambda.
    $$
    Hence, $\rho \alpha$ and $\beta \lambda$ are null because $rl\lambda$ and $\rho rl$ are null, every kernel is an~$\mathcalb{N}$-monomorphism, and every cokernel is an~$\mathcalb{N}$-epimorphism.
    Since $\rho \alpha, \beta \lambda \in \mathcalb{N}$, the~morphisms $\alpha$ and $\beta$ factor through ${\rm ker}\, \rho$ and ${\rm coker}\, \lambda$ respectively:
    $$\alpha = ({\rm ker}\, \rho) \alpha',\quad \beta = \beta' ({\rm coker}\, \lambda).$$
    Since $\alpha ({\rm coker}\, \lambda) = rl = ({\rm ker}\, \rho) \beta$, we have
    $$({\rm ker}\, \rho) \alpha' ({\rm coker}\, \lambda) = rl = ({\rm ker}\, \rho) \beta' ({\rm coker}\, \lambda).$$
    Since ${\rm ker}\, \rho$ is a monomorphism and ${\rm coker}\, \lambda$ is an epimorphism, we have $\alpha' = \beta'$.
    We put $\Lambda := \alpha' = \beta'\colon {\rm Im}\, S \to {\rm Ker}\, T$.
    The final diagram looks like this:
    \begin{equation}\label{full_diagram}
    \begin{tikzcd}[trim right=0cm, trim left=0cm]
        & & & {\rm Coker}\, \lambda \ar[dd,"\alpha" {yshift=25pt},bend left=80,dashed] \ar[lldddd,"\alpha' = \Lambda",dashed,  controls={+(5,-0.5) and +(6,-1.5)},yshift=-0.5pt,xshift=0.5pt] \ar[lldddd,"\Lambda = \beta'"',dashed,  controls={+(-6.5,2) and +(-5.5,-0.5)}] & \\
        A \ar[rr,"f" {xshift=-20pt}] \ar[dd,"a"'] \ar[rd,"\lambda"',dashed] & & B \ar[r,"{\rm coim}\, g",two heads] \ar[d,"{\rm coim}\, b"',two heads] & {\rm Coim}\, g \ar[r] \ar[d,"t",two heads] & C \ar[dd,"c"]\\
        & M \ar[r, phantom, shift right=4ex, "{\rm PB}" marking] \ar[r,"l",tail] \ar[d,"s"',tail] \ar[rruu,"{\rm coker}\, \lambda" {yshift=5pt,xshift=10pt}, bend left=35,two heads] \ar[dd,"\beta"' {yshift=-25pt},bend right=80,dashed] & \bullet \ar[d,"{\rm im}\, b",tail] \ar[r,"r"',two heads] \ar[r, phantom, shift left=4ex, "{\rm PO}" marking] & N \ar[rd,"\rho",dashed] & \\
        A' \ar[r] & {\rm Im}\, f' \ar[r,"{\rm im}\, f'"',tail] & B' \ar[rr,"g'"' {xshift=25pt}] & & C'\\
        & {\rm Ker}\, \rho \ar[rruu,"{\rm ker}\, \rho"' {yshift=-5pt,xshift=-10pt}, bend right=35,tail,yshift=0.5pt] & & &
    \end{tikzcd}
    \end{equation}
\end{proof}

We refer to the morphism $\Lambda\colon {\rm Im}\, S \to {\rm Ker}\, T$ constructed in Theorem~\ref{lambek_ex_th} as the \emph{Lambek morphism} of the squares $S$ and $T$.

\begin{theorem}[The~Lambek isomorphism]
    Suppose that in diagram~{\rm \eqref{diagram1}} in a homological category $(\mathcalb{C},\mathcalb{N})$, $b$ is an exact morphism and the rows are exact.
    Then the Lambek morphism $\Lambda\colon \mathrm{Im}\, S \to \mathrm{Ker}\, T$ is an isomorphism.
\end{theorem}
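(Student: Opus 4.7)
The plan is to show that $rl\colon M\to N$ is exact and that $\mathrm{coim}(rl)=\mathrm{coker}\,\lambda$, $\mathrm{im}(rl)=\mathrm{ker}\,\rho$; the normal decomposition of $rl$ then reads $(\mathrm{ker}\,\rho)\overline{rl}(\mathrm{coker}\,\lambda)$ with $\overline{rl}$ an isomorphism, and uniqueness of the Lambek morphism in Theorem~\ref{lambek_ex_th} forces $\Lambda=\overline{rl}$. We retain the abbreviations $\lambda,l,r,\rho$ from the preceding proof.

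The key technical step is to construct an auxiliary morphism $\nu\colon\mathrm{Im}\,f\to M$ that locates $\mathrm{Ker}\,r$ inside $M$. Exactness of the top row gives $g(\mathrm{im}\,f)=g(\mathrm{ker}\,g)=0$, whence commutativity of $T$ yields $g'b(\mathrm{im}\,f)=cg(\mathrm{im}\,f)=0$; by exactness of the bottom row, $\mathrm{im}\,f'=\mathrm{ker}\,g'$, so $b(\mathrm{im}\,f)=(\mathrm{im}\,f')\mu$ for a unique $\mu$. The pair $\bigl((\mathrm{coim}\,b)(\mathrm{im}\,f),\mu\bigr)$ is a cone over the pullback defining $M$, inducing $\nu$ with $l\nu=(\mathrm{coim}\,b)(\mathrm{im}\,f)$. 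Applying the dual of Lemma~\ref{ker_coker_pb_po} to the pushout in~(\ref{diagram2}) gives $r=\mathrm{coker}\bigl((\mathrm{coim}\,b)(\mathrm{ker}\,g)\bigr)$, and since $\mathrm{ker}\,g=\mathrm{im}\,f$, Corollary~\ref{yak_lemma_corr_1}(i) delivers
\[
\mathrm{ker}\,r=\mathrm{im}\bigl((\mathrm{coim}\,b)(\mathrm{im}\,f)\bigr)=\mathrm{im}(l\nu)=l\cdot\mathrm{im}\,\nu.
\]
In particular $(\mathrm{coker}\,l)(\mathrm{ker}\,r)\in\mathcalb{N}$, so Axiom~\ref{fourth_axiom} factors $rl=\iota\pi$ with $\pi$ a cokernel and $\iota$ a kernel; by Lemma~\ref{ker_coker_same}(iii), $rl$ is exact, with $\iota=\mathrm{im}(rl)$ and $\pi=\mathrm{coim}(rl)$.

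Next I would identify $\mathrm{ker}(rl)$ with $\mathrm{im}\,\lambda$ as subobjects of $M$. One direction: $rl\lambda=r(\mathrm{coim}\,b)f=t(\mathrm{coim}\,g)f$ is null, because $gf$ is null and $(\mathrm{im}\,g)\overline{g}$ is $\mathcalb{N}$-monic by Lemma~\ref{ex2_middle_morphism_th}, forcing $(\mathrm{coim}\,g)f\in\mathcalb{N}$; so $\mathrm{im}\,\lambda$ factors through $\mathrm{ker}(rl)$. Conversely, if $rlx\in\mathcalb{N}$ then $lx$ factors through $\mathrm{ker}\,r=l\cdot\mathrm{im}\,\nu$, and since $l$ is a monomorphism (any kernel is), $x$ factors through $\mathrm{im}\,\nu$. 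Substituting $f=(\mathrm{im}\,f)\overline{f}(\mathrm{coim}\,f)$ into $l\lambda=(\mathrm{coim}\,b)f$ and cancelling $l$ gives $\lambda=\nu\overline{f}(\mathrm{coim}\,f)$; because $\overline{f}(\mathrm{coim}\,f)$ is $\mathcalb{N}$-epi (Lemma~\ref{ex2_middle_morphism_th}), Lemma~\ref{kop_weg_lemma} yields $\mathrm{im}\,\lambda=\mathrm{im}\,\nu$, and the identification is complete. A fully dual argument gives $\mathrm{coker}(rl)=\mathrm{coim}\,\rho$.

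By Lemma~\ref{kop_weg_lemma} we then obtain $\mathrm{coim}(rl)=\mathrm{coker}(\mathrm{im}\,\lambda)=\mathrm{coker}\,\lambda$ and $\mathrm{im}(rl)=\mathrm{ker}(\mathrm{coim}\,\rho)=\mathrm{ker}\,\rho$, so the plan is complete. The main difficulty is the identification $\mathrm{ker}\,r=l\cdot\mathrm{im}\,\nu$: one must combine both exact rows with the pushout description of $r$ to place $\mathrm{Ker}\,r$ precisely inside $M$, and this is exactly what enables Axiom~\ref{fourth_axiom} to be invoked and what ultimately aligns $\mathrm{ker}(rl)$ with $\mathrm{im}\,\lambda$.
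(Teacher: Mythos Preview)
Your argument is correct; it shares with the paper the construction of the auxiliary morphism $\nu$ (the paper calls it $m$, with $lm=(\mathrm{coim}\,b)(\mathrm{im}\,f)$) and the identification $\mathrm{coker}\,\lambda=\mathrm{coker}\,\nu$, but from that point it takes a genuinely different route. The paper also constructs the dual morphism $n\colon N\to\mathrm{Coim}\,g'$, shows $r=\mathrm{coker}(lm)$ and $l=\mathrm{ker}(nr)$, and then applies the Composition Lemma (Lemma~\ref{composition_lemma}) to $lm$ and to $nr$ to identify the morphisms $\alpha,\beta$ of Theorem~\ref{lambek_ex_th} with the exact morphisms $\varepsilon,\psi$ of that lemma; exploiting that $\mathrm{ker}\,l$ is null, it concludes that $\varepsilon$ is a kernel, whence $(\mathrm{ker}\,\rho)\Lambda$ is a kernel and, by Lemma~\ref{ker-cancelation}, $\Lambda$ is a kernel (and dually a cokernel). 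You instead compute $\mathrm{ker}\,r=l\cdot\mathrm{im}\,\nu$ directly, invoke Axiom~\ref{fourth_axiom} at once to see that $rl$ is exact, and then verify $\mathrm{ker}(rl)=\mathrm{im}\,\lambda$ (and dually), so that $\Lambda$ coincides with the invertible middle morphism $\overline{rl}$ of the normal decomposition. Your path bypasses the Composition Lemma entirely and is more self-contained; the paper's path, in exchange, shows how that lemma packages exactly the exactness data needed here. One small citation slip: the deduction $\mathrm{im}\,\lambda=\mathrm{im}\,\nu$ from $\lambda=\nu\,\overline{f}(\mathrm{coim}\,f)$ with $\overline{f}(\mathrm{coim}\,f)$ an $\mathcalb{N}$-epimorphism is really Lemma~\ref{lemma_n_mono_ker_null_and_dual}(iii) (dual), not Lemma~\ref{kop_weg_lemma}.
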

\begin{proof}
    Consider diagram~(\ref{full_diagram}) from the proof of Theorem~\ref{lambek_ex_th}.
    By Lemma~\ref{ker_coker_pb_po}, $l$ is a kernel, moreover, $l = {\rm ker}(({\rm coker}\, f') ({\rm im}\, b))$. Note that
    $$({\rm coker}\, f')({\rm im}\, b)({\rm coim}\, b)({\rm im}\, f) \in \mathcalb{N}.$$
    Hence, there exists a unique morphism $m\colon {\rm Im}\, f \to M$ such that $lm = ({\rm coim}\, b) ({\rm im}\, f)$.
    Dually, the morphism $r = {\rm coker}(({\rm coim}\, b) ({\rm ker}\, g))$ in (\ref{full_diagram}) is a cokernel and there exists a unique morphism $n\colon N \to {\rm Coim}\, g'$ such that $nr = ({\rm coim}\, g') ({\rm im}\, b)$.
    \begin{equation}\label{diagram3}
    \begin{tikzcd}[column sep=large]
        A \ar[r,"\overline{f} ({\rm coim}\, f)"] \ar[dd,"a"'] \ar[rd,"\lambda"'] & {\rm Im}\, f \ar[r,"{\rm im}\, f",tail] \ar[d,dashed,"m"] & B \ar[d,"{\rm coim}\, b"',two heads] \ar[r, "{\rm coim}\, g", two heads] & {\rm Coim}\, g \ar[d,"t",two heads] \ar[r,"({\rm im}\, g) \overline{g}"] & C \ar[dd,"c"]\\
        & M \ar[r, phantom, shift right=4ex, "{\rm PB}" marking] \ar[r,"l",tail] \ar[d,"s"',tail] & \bullet \ar[d,"{\rm im}\, b",tail] \ar[r,"r"',two heads]  \ar[r, phantom, shift left=4ex, "{\rm PO}" marking] & N \ar[d,"n"',dashed] \ar[rd,"\rho"] \\
        A' \ar[r,"\overline{f'} ({\rm coim}\, f')"'] & {\rm Im}\, f' \ar[r,"{\rm im}\, f'"',tail] & B' \ar[r,"{\rm coim}\, g'"', two heads] & {\rm Coim}\, g' \ar[r,"({\rm im}\, g') \overline{g'}"'] & C'
    \end{tikzcd}
    \end{equation}
    Note that $l = {\rm ker}(({\rm coker}\, f') ({\rm im}\, b)) = {\rm ker}(({\rm coim}\, g') ({\rm im}\, b))$ because the rows are exact.
    Hence, by commutativity, $l = {\rm ker}(nr)$.
    Dually, $r = {\rm coker}(lm)$.
    Note that $l$ is exact since it is a pullback of a kernel ${\rm im}\, f'$.
    Dually, $n$ is exact.
    Since~\eqref{diagram3} commutes, ${\rm coker}\, \lambda = {\rm coker}(m \overline{f} ({\rm coim}\, f))$.
    Therefore, ${\rm coker}\, \lambda = {\rm coker}\, m$.
    By~duality, ${\rm ker}\, \rho = {\rm ker}\, n$.
    Consider the composition $lm$.
    By Lemma~\ref{composition_lemma}, there exists an exact morphism $\varepsilon\colon {\rm Coker}\, m \to {\rm Coker}(lm)$ such that $({\rm coker}(lm)) l = \varepsilon ({\rm coker}\, m)$.
    Finally, we get the equalities
    $$rl = ({\rm coker}(lm)) l = \varepsilon ({\rm coker}\, m) = \varepsilon ({\rm coker}\, \lambda) = \alpha ({\rm coker}\, \lambda)$$
    where $\alpha\colon {\rm Coker}\, \lambda \to N$ is the same as in~\eqref{full_diagram}.
    Dually, consider the composition $nr$.
    By Lemma~\ref{composition_lemma}, there exists an exact morphism $\psi\colon {\rm Ker}(nr) \to {\rm Ker}\, n$ such that $r ({\rm ker}(nr))= ({\rm ker}\, n) \psi$ and the following equalities hold:
    $$rl = r ({\rm ker}(nr)) = ({\rm ker}\, n) \psi = ({\rm ker}\, \rho) \psi = ({\rm ker}\, \rho) \beta$$
    where $\beta\colon M \to {\rm Ker}\, \rho$ is as in~\eqref{full_diagram}.
    Thus, $\varepsilon = \alpha$, $\psi = \beta$, and $\alpha$ and $\beta$ are exact morphisms.
    By the definition of $\Lambda$, we have
    $$({\rm ker}\, \rho) \Lambda = \alpha = \varepsilon,\ \Lambda ({\rm coker}\, \lambda) = \beta = \psi.$$
    Since $({\rm coker}\, m) ({\rm ker}\, l) \in \mathcalb{N}$, by Lemma~\ref{composition_lemma}(2), $\varepsilon$ is a kernel, and hence, by Lemma~\ref{ker-cancelation}, $\Lambda$ is a kernel.
    Dually, $\psi$ is a cokernel, and thus $\Lambda$ is a cokernel.
    Therefore, $\Lambda$ is an isomorphism.
\end{proof}


\begin{thebibliography}{99}

\bibitem{ConnCons2019}
A.~Connes and C.~Consani, 
``Homological algebra in~characteristic one,'' 
High. Struct. {\bf 3}(1) (2019), 155--247.

\bibitem{Fr2023}
T.~Fritz, 
``Non-abelian and $\varepsilon$-curved homological algebra with arrow categories,''
J. Pure Appl. Algebra {\bf 227}(6) (2023), Article ID 107314, 25~p.

\bibitem{Gr92}
M.~Grandis, 
``On the categorical foundations of homological and homotopical algebra,''
Cah. Topologie G\'eom. Diff\'er. Cat'eg. {\bf 33}(2) (1992)), 135--175.

\bibitem{MGrandis-2013}
M.~Grandis,
\emph{Homological Algebra in~Strongly Non-Abelian Settings}, World Scientific Publishing, 2013.

\bibitem{Ko05_2}
Ya.~A.~Kopylov,
{\it On the Lambek invariants $\mathrm{Ker}$ and $\mathrm{Im}$ of commutative
squares in~a~quasi-abelian category}, Sci. Ser. A Math. Sci. (N.S.) {\bf 11} (2005),
57--67.

\bibitem{KoWe2012}
Ya.~A.~Kopylov, S.-A.~Wegner,
{\it On the notion of a semi-abelian category in the sense of Palamodov},
Appl. Categor. Struct. {\bf 20} (2012), 531--541.

\bibitem{Lambek-1964}
J.~Lambek,
{\it Goursat's theorem and homological algebra}, Can. Math. Bull.
{\bf 7} (1964), 597--608.

\bibitem{Le}
J.~B.~Leicht,
{\it Axiomatic proof of J. Lambek's homological theorem},
Can. Math. Bull. \textbf{7} (1964), 609--613.

\bibitem{No1}
Y.~Nomura,
{\it An exact sequence generalizing a theorem of Lambek},
Arch. Math. \textbf{22} (1971), 467--478.

\end{thebibliography}
\end{document}